\newtheorem {lemma}{Lemma}[section]
\newtheorem {theorem} {Theorem}[section]
\newtheorem {conjecture}{Conjecture}[section]
\newtheorem {corollary}{Corollary}[section]
\newif\ifhighlighted
\newcommand{\checkHighlight}[3]{%
  \highlightedfalse
  \@for\pos:=#3\do{%
    \expandafter\checkPair\pos\relax{#1}{#2}%
  }%
}
\def\checkPair#1#2\relax#3#4{%
  \edef\r{#1}%
  \edef\c{#2}%
  \edef\checkr{#3}%
  \edef\checkc{#4}%
  \ifx\r\checkr
    \ifx\c\checkc
      \highlightedtrue
    \fi
  \fi
}
\newcommand{\drawYoungTableauHighlight}[4]{%
  \begin{tikzpicture}[x=#2, y=#3]
    \foreach [count=\i] \row in {#1} {
      \foreach [count=\j] \entry in \row {
        \checkHighlight{\i}{\j}{#4}%
        \pgfmathsetmacro{\x}{\j-1}
        \pgfmathsetmacro{\y}{-\i+1}
        \ifhighlighted
          \filldraw[fill=lightgray, thick] (\x,\y) rectangle ++(1,-1);
        \else
          \draw[thick] (\x,\y) rectangle ++(1,-1);
        \fi
        \node at (\x+0.5,\y-0.5) {\entry};
      }
    }
  \end{tikzpicture}%
}
\begin{document}

\title{Proof of a conjecture on eigenvalues of transposition graph}

\author{Cheng Yeaw Ku\footnote{Division of Mathematical Sciences, School of Physical and Mathematical Sciences, Nanyang Technological University, 21 Nanyang link, 
Singapore 637371, Singapore. Email: cyku@ntu.edu.sg.} \and Leyou Xu\footnote{School of Mathematical Sciences, South China Normal University, Guangzhou 510631, China. Email: leyouxu@m.scnu.edu.cn. Supported by the China Scholarship Council program (Project ID: 202406750056).} }

\date{}
\maketitle

\begin{abstract}

The transposition graph $Cay(S_n,T_n)$ is the Cayley graph on the symmetric group $S_n$ generated by the set $T_n$ of all transpositions. In this paper, we show that each integer in the interval $\left[-{\lfloor(2n+1)/3 \rfloor\choose 2}, {\lfloor(2n+1)/3 \rfloor\choose 2}\right]$ is an eigenvalue of $Cay(S_n,T_n)$. This proves a recent conjecture by Kravchuk \cite{Kravchuk}.

\medskip

\noindent {\it Keywords: transposition graph; eigenvalue} 
\end{abstract}

\section{Introduction}

Let $S_n$ be the symmetric group on $[n]:=\{1,\dots,n\}$, and $T_n$ be the set of transpositions in $S_n$. The Cayley graph of $S_n$ generated by $T_n$, denoted by $Cay(S_n,T_n)$, is called the {\em transposition graph}  whose vertex set is $S_n$, where two vertices $f,g\in S_n$ are adjacent if and only if $fg^{-1}\in T_n$.

The eigenvalues of $Cay(S_n,T_n)$ are precsiely the eigenvalues of its corresponding adjacency matrix. 
It was established in  \cite{KY} that all eigenvalues of $Cay(S_n,T_n)$ are integers.
Since $Cay(S_n,T_n)$ is ${n\choose 2}$-regular and bipartite, the largest eigenvalue is equal to ${n\choose 2}$,  and the spectrum of $Cay(S_n,T_n)$ is symmetric about $0$; see \cite{BH}. Consequently, each eigenvalue of $Cay(S_n,T_n)$ lies within the interval $\left[-{n\choose 2},{n\choose 2}\right]$. As the spectrum of graph eigenvalues generally reveals key algebraic and structural information, it is natural to investigate the distribution of the eigenvalues of $Cay(S_n,T_n)$. Konstantinova and Kravchuk \cite{KK,KK2} demonstrated that all integers in certain intervals of linear length are eigenvalues of $Cay(S_n,T_n)$. 
More recently, Kravchuk \cite{Kravchuk} improved upon these results by proving the following theorem.

\begin{theorem}\cite{Kravchuk}\label{EK}
Let $n\ge 31$. All integers in the interval $[-n,n]$ are eigenvalues of $Cay(S_n,T_n)$.
\end{theorem}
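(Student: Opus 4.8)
The plan is to translate the spectral question into a purely combinatorial one about contents of Young diagrams, and then to realize every target value by an explicit family of partitions.

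First I would record the eigenvalue formula. For each partition $\lambda \vdash n$ the transposition graph $Cay(S_n,T_n)$ has an eigenvalue equal to the content sum $\phi(\lambda) = \sum_{(i,j)\in\lambda}(j-i)$, this being the scalar by which the class sum $\sum_{\sigma\in T_n}\sigma$ acts on the Specht module $S^\lambda$ (equivalently, the sum of the Jucys--Murphy elements); this identity underlies the integrality result of \cite{KY}. Rewriting by rows and columns gives $\phi(\lambda) = \sum_i \binom{\lambda_i}{2} - \sum_j \binom{\lambda_j'}{2}$, where $\lambda'$ is the conjugate. In particular $\phi(\lambda') = -\phi(\lambda)$, so the spectrum is symmetric about $0$ and it suffices to prove that every integer $t$ with $0 \le t \le \binom{m}{2}$, where $m = \lfloor (2n+1)/3\rfloor$, equals $\phi(\lambda)$ for some $\lambda \vdash n$.

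The engine for hitting consecutive integers is an elementary ``slide'': if $\lambda$ has a removable cell of content $c$ together with an addable cell of content $c-1$, moving that single box produces a partition of the same $n$ with content sum exactly $\phi(\lambda)-1$. For a sufficiently balanced diagram such a pair of corners always exists, so one can descend in unit steps over a range of values. I would organise this into a two-parameter family in which a ``bulk'' parameter fixes the coarse value and a ``slider'' parameter, living near the main diagonal, tunes it by $\pm 1$; crucially, since the target ceiling $\binom{m}{2}$ lies well below the absolute maximum $\binom{n}{2}$, there is enough spare mass to park near the diagonal (contributing content close to $0$) to keep the slider available throughout. By arranging the coarse values so that the unit-tunable windows around consecutive bulk choices overlap, the realized content sums chain together into the full interval $[0,\binom{m}{2}]$, with the bottom anchored at a self-conjugate (content $0$) diagram.

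The decisive step, and the one I expect to be the main obstacle, is showing that the chain reaches exactly $\binom{m}{2}$ and never skips an integer at the top of its range. Near the upper end the partitions with large content sum become increasingly rigid (few rows, long first row), so the tunable windows shrink and the danger of a gap is real; controlling this amounts to a quadratic optimisation of content sum against the box budget $\sum_i \lambda_i = n$ subject to retaining a $\pm1$ slide, and its optimum should be precisely $\binom{m}{2}$ with $m=\lfloor(2n+1)/3\rfloor$, the floor arising from the integrality of the parts at the extremal shape. I would therefore spend most of the effort pinning down the extremal partition, verifying that its content sum is $\binom{m}{2}$, and checking that the descent from it down to $0$ is gap-free. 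Finally I would note that the statement is inherently a large-$n$ statement: for small $n$ the content sums are sparse (for instance only even values occur when $n=4,5$), so, as in Theorem \ref{EK}, a hypothesis $n \ge n_0$ is needed, and I would track the edge cases introduced by the floor function and by these parity phenomena.
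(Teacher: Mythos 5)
The first thing to note is that the paper does not prove this statement at all: Theorem \ref{EK} is quoted from \cite{Kravchuk} as a known prior result, so your proposal has to stand on its own --- and it does not. The framework you set up is correct and matches the paper's Section 2 (eigenvalues are the content sums $\sum_{(i,j)\in\lambda}(j-i)$, conjugation negates them, so one may work with nonnegative targets), but the claim that every integer in the target range is actually attained is never established. Your ``slide'' lemma, as stated, is already false: the partition $(2,1)$ has a removable cell of content $1$ and an addable cell of content $0$, yet no partition of $3$ has content sum $2$ --- the move produces a non-partition precisely when the addable cell sits directly below the removable one, so any correct version needs this caveat. More importantly, the existence of a valid slide at every stage of the descent, with unit-tunable windows overlapping all the way from a content-$0$ anchor up to the top of the range, is exactly the hard combinatorial content of the theorem, and you explicitly defer it (``the decisive step, and the one I expect to be the main obstacle\dots I would therefore spend most of the effort\dots''). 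Both Kravchuk's actual proof and this paper's proofs of the analogous statements (Theorems \ref{x} and \ref{c}) consist of nothing but this deferred work: explicit multi-parameter families of partitions (Lemmas \ref{odd}--\ref{odd4} and \ref{even}--\ref{even6}, Tables \ref{tab:table1}--\ref{tab:table4}), engineered so that arms and legs of $T_\lambda$ cancel, with separate constructions according to parity and to the range of the parameters. Nothing resembling a generic ``sufficiently balanced diagrams always admit a slide'' principle is available to shortcut this.

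There is also a structural misdirection: you aim at the interval $\left[0,\binom{m}{2}\right]$ with $m=\lfloor(2n+1)/3\rfloor$, which is not this theorem but the paper's main result (Theorem \ref{c}); in the paper that statement requires $n\ge 76$ and is obtained only by combining the new constructions of Section 3 with Kravchuk's Theorem \ref{Kra} for the upper part of the range. The statement you were asked to prove needs only $[-n,n]$ for $n\ge 31$, so a much smaller target suffices, but even that smaller target requires exhibiting a concrete partition for each value, which your sketch never does; and if you insist on the larger target, you would additionally have to make your argument work for all $n\ge 31$, well below the paper's threshold. (A minor further inaccuracy: your parity remark fails for $n=5$, since $(4,1)$ has content sum $5$.)
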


Furthermore, Kravchuk showed in the same work that there exists an interval  of quadratic length with respect to $n$ in which each integer is an eigenvalue of $Cay(S_n,T_n)$.

\begin{theorem}\cite{Kravchuk}\label{Kra}
Let $n\ge 48$. All integers in the interval $[-y,-x]$ and $[x,y]$ are eigenvalues of $Cay(S_n,T_n)$, where $x={\lceil n/3 \rceil+1\choose  2}-2\left(\lfloor\frac{2n}{3}\rfloor-1\right)$  and $y={\lfloor (2n+1)/3\rfloor\choose 2}$
\end{theorem}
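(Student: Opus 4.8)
The plan is to convert the spectral problem into a combinatorial one and then to fill $[x,y]$ with a single explicitly parametrized family. Recall that since $T_n$ is a full conjugacy class consisting of involutions, the element $\sum_{t\in T_n}t$ is central in $\mathbb{C}[S_n]$ (it is the sum of the Jucys--Murphy elements) and acts on the irreducible module $S^\lambda$ by the scalar $\sum_{(i,j)\in\lambda}(j-i)$; this is exactly the content-sum computation underlying the integrality result of \cite{KY}. Hence the eigenvalues of $Cay(S_n,T_n)$ are precisely the content sums $cs(\lambda):=\sum_{(i,j)\in\lambda}(j-i)$ over partitions $\lambda$ of $n$. Conjugating a diagram negates every content, so $cs(\lambda')=-cs(\lambda)$; it therefore suffices to realize every integer of $[x,y]$ as a content sum, and $[-y,-x]$ comes for free.

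The engine I would use is the effect of prepending a long first row. If $\nu\vdash n-p$ with $\nu_1\le p$ and $\lambda=(p,\nu_1,\nu_2,\dots)$, then the new row contributes $\binom{p}{2}$, while each of the $n-p$ cells of $\nu$ drops one row and loses $1$ of content, giving
\begin{equation*}
cs(\lambda)=\binom{p}{2}-(n-p)+cs(\nu).
\end{equation*}
Applying Theorem \ref{EK} to the symmetric group on $n-p$ points, every integer of $[-(n-p),\,n-p]$ is a content sum of some partition of $n-p$, and such $\nu$ may be taken near self-conjugate, hence narrow, so the constraint $\nu_1\le p$ is harmless in the relevant range. Thus for each fixed $p$ the content sums $cs(\lambda)$ sweep the whole integer interval
\begin{equation*}
J_p:=\left[\binom{p}{2}-2(n-p),\ \binom{p}{2}\right].
\end{equation*}

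Next I would choose the range of $p$ and verify the $J_p$ abut. Both endpoints of $J_p$ strictly increase with $p$, so letting $p$ run from $p_0:=\lceil n/3\rceil+1$ up to $m:=\lfloor(2n+1)/3\rfloor$ makes the smallest value the bottom of $J_{p_0}$ and the largest the top of $J_m$. Using $n-\lceil n/3\rceil=\lfloor 2n/3\rfloor$, the bottom of $J_{p_0}$ is exactly $x=\binom{\lceil n/3\rceil+1}{2}-2(\lfloor 2n/3\rfloor-1)$, while the top of $J_m$ is exactly $y=\binom{m}{2}$. Consecutive intervals leave no gap precisely when $\binom{p+1}{2}-2(n-p-1)\le\binom{p}{2}+1$, i.e.\ when $3p\le 2n-1$; this holds for all $p\le m-1$ exactly because $m=\lfloor(2n+1)/3\rfloor$ forces $3m\le 2n+1$. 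Hence $\bigcup_{p=p_0}^{m}J_p=[x,y]$, which would give the theorem. I find it satisfying that the threshold $m\approx 2n/3$ is not imposed but emerges: it is the largest first-row length for which the fine-tuning interval of the $\approx n/3$ remaining cells still bridges the coarse jump $\binom{p+1}{2}-\binom{p}{2}=p$.

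The step I expect to be the genuine obstacle is the base input, not the bookkeeping. Theorem \ref{EK} only guarantees coverage of $[-(n-p),n-p]$ once $n-p\ge 31$, whereas $p=m$ forces sub-partitions of size $n-m=\lceil n/3\rceil$, which is below $31$ for moderate $n$ (roughly $48\le n\le 90$). For those $n$ I would have to replace the black-box appeal to Theorem \ref{EK} by a direct verification that partitions of the specific small sizes $n-p$ already realize every integer of $[-(n-p),n-p]$, and simultaneously confirm that each witness can be chosen with first part at most $p$. Controlling this width constraint together with the small-size base cases, and reconciling the floor/ceiling arithmetic so that the extreme values land exactly on $x$ and $y$, is where the care lies; the prepending identity and the overlap computation above are the parts I expect to go through cleanly.
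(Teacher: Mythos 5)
Your skeleton is exactly the strategy of the cited source \cite{Kravchuk} (whose prepending identity this paper imports as Lemma \ref{l}, and whose strategy this paper reuses to prove Theorem \ref{c}): prepend a first row of length $p$, use $\rho_\lambda=\rho_{\lambda'}+\binom{p}{2}-(n-p)$, let the tail partition sweep a linear interval of eigenvalues, and verify that the resulting intervals $J_p$ abut as $p$ runs from $\lceil n/3\rceil+1$ to $m=\lfloor (2n+1)/3\rfloor$. Your bookkeeping on that skeleton is correct: the bottom of $J_{\lceil n/3\rceil+1}$ is $x$, the top of $J_m$ is $y$, and the abutment condition $3p\le 2n-1$ does hold for all $p\le m-1$. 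Note, for the record, that this paper never proves Theorem \ref{Kra}; it is quoted as an external result, so the comparison here is against the machinery the paper builds on top of it.

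There are, however, two genuine gaps. First, the engine feeding your sweep is Theorem \ref{EK}, which requires the sub-symmetric group to have degree at least $31$; but when $p=m$ the tail has size $n-m=\lceil (n-1)/3\rceil$, which is less than $31$ for every $n\le 91$. So on essentially half of the theorem's claimed range, $48\le n\le 91$ (including the boundary case $n=48$ itself), your argument does not cover the upper portion of $[x,y]$ at all. You flag this yourself, but the deferred ``direct verification'' of small base cases is not a finishing touch; it is the bulk of the content, and it is presumably exactly what makes the constant $48$ (rather than roughly $92$) attainable in the cited theorem. Second, your justification of the width constraint $\nu_1\le p$ --- that witnesses ``may be taken near self-conjugate, hence narrow'' --- is false on both counts: conjugation negates the content sum, so a near-self-conjugate partition has content sum near $0$ and can never witness the eigenvalues near $\pm(n-p)$, which are precisely the ones you need to reach the bottom of $J_p$; moreover self-conjugate shapes need not be narrow (hooks are extreme examples). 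What is actually true, and what this paper does explicitly in the proof of Theorem \ref{c}, is that the known explicit witnesses have first row roughly $\tfrac{1}{2}(n-p)$, so the constraint $\nu_1\le p$ holds only marginally (it needs $p\ge$ roughly $n/3$, which is why the sweep cannot start lower) and must be verified against the concrete constructions behind the linear-interval theorem rather than its black-box statement.
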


These results describe intervals of consecutive integers that appear in the spectrum of the transposition graph, while leaving a gap spanning from linear to quadratic in $n$, where the presence of eigenvalues are yet to be confirmed. Motivated by this, the following conjecture has been proposed:

\begin{conjecture}
There exists a constant $n_0$ such that for $n\ge n_0$, all integers in the interval 
\[ \left[-{\lfloor (2n+1)/3\rfloor\choose 2}, ~{\lfloor (2n+1)/3\rfloor\choose 2}\right] \]
 are eigenvalues of $Cay(S_n,T_n)$.
\end{conjecture}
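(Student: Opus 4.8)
The plan is to use the classical description of the spectrum. By the representation theory of $S_n$, the eigenvalues of $Cay(S_n,T_n)$ are indexed by partitions $\lambda\vdash n$, the eigenvalue attached to $\lambda$ being the content sum $c(\lambda):=\sum_{(i,j)\in\lambda}(j-i)$; moreover $c(\lambda')=-c(\lambda)$ for the conjugate partition $\lambda'$, which recovers the symmetry of the spectrum about $0$. Writing $m:=\lfloor(2n+1)/3\rfloor$, the theorem is therefore equivalent to the purely combinatorial statement that every integer in $[0,\binom{m}{2}]$ is realized as $c(\lambda)$ for some $\lambda\vdash n$, the negative half following by conjugation. The one computational input I will use repeatedly is the peeling identity: if $\hat\lambda=(\lambda_2,\lambda_3,\dots)$ denotes $\lambda$ with its first row deleted, so $\hat\lambda\vdash n-\lambda_1$, then $c(\lambda)=\binom{\lambda_1}{2}-(n-\lambda_1)+c(\hat\lambda)$, because the first row contributes $\binom{\lambda_1}{2}$ while shifting the remaining cells down one row lowers each of their contents by $1$.

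First I would set up a strong induction on $n$, proving the strengthened assertion $Q(n)$: every integer in $[0,\binom{m}{2}]$ equals $c(\lambda)$ for some $\lambda\vdash n$ with $\lambda_1\le m$ and at most $m$ rows. The extra control on both the longest row and the number of rows is essential: it is exactly what lets the conclusion be fed back into the peeling identity, and it is preserved under conjugation (which interchanges the two bounds), so that the negative tail content sums needed below come for free.

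To carry out the inductive step I would let the first row length $a$ range over the integers in $[\lceil 2n/5\rceil,\,m]$ and, for each such $a$, set $r=n-a$ and take the tail $\hat\lambda\vdash r$ supplied by $Q(r)$ (together with conjugation for negative values). Since $Q(r)$ furnishes every integer in $[-\binom{m_r}{2},\binom{m_r}{2}]$ with $m_r=\lfloor(2r+1)/3\rfloor$, the peeling identity shows that this value of $a$ realizes the full integer interval $I_a=\big[\binom{a}{2}-r-\binom{m_r}{2},\ \binom{a}{2}-r+\binom{m_r}{2}\big]$. The choice $a\ge 2n/5$ forces $m_r\le a$, so the tail fits under the first row and $\lambda_1=a$; the bound $a\le m$ keeps $\lambda_1\le m$; and since $\lambda$ then has at most $1+m_r$ rows, the condition $1+m_r\le m$ keeps the number of rows below $m$. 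It remains to check that the $I_a$ chain together without gaps and reach the target: the no-gap condition between consecutive values reduces to $\binom{m_r}{2}+\binom{m_{r-1}}{2}\ge a$, which is most stringent at $a=m$ (where $r\approx n/3$) and holds comfortably for large $n$; the lowest interval $a=\lceil 2n/5\rceil$ has negative left endpoint, while the highest $a=m$ has right endpoint $\binom{m}{2}-r+\binom{m_r}{2}>\binom{m}{2}$ since $\binom{m_r}{2}>r$. Hence $\bigcup_a I_a\supseteq[0,\binom{m}{2}]$, which would complete the step.

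The main obstacle is twofold. First, the whole scheme only runs once the two size bounds in $Q$ are propagated simultaneously with gap-free chaining all the way up to $\binom{m}{2}$; verifying that the three inequalities $m_r\le a$, $1+m_r\le m$, and $\binom{m_r}{2}+\binom{m_{r-1}}{2}\ge a$ hold on the entire window $a\in[\lceil 2n/5\rceil,m]$ is where the constant $2/3$ (hence $m\approx 2n/3$) is forced, and the floors in $m$ and $m_r$ must be handled honestly rather than asymptotically. Second, and more delicate, is the base of the induction: $Q(n)$ genuinely fails for small $n$ (already $n=3$ misses the value $1$), so the recursion must bottom out on a verified initial band of $n$. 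Here I would invoke Theorems~\ref{EK} and \ref{Kra}, which between them realize $[0,n]\cup[x,\binom{m}{2}]$, to dispatch all but a bounded middle range for the base values, and then check the remaining finitely many cases — including the width and height bounds of $Q$ — directly. Pinning down the smallest admissible $n_0$, and confirming that the realizers coming from the earlier theorems can be chosen to respect the bounds of $Q$, is the part I expect to require the most care.
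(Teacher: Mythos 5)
Your inductive scheme is a genuinely different route from the paper's, and its arithmetic core is plausible: your peeling identity is exactly the paper's Lemma~\ref{l}, the window $a\in[\lceil 2n/5\rceil,m]$, the no-gap condition $\binom{m_r}{2}+\binom{m_{r-1}}{2}\ge a$, and the two-sided shape bound in $Q(n)$ (needed so that conjugation and peeling remain compatible) all fit together for moderately large $n$. The paper, by contrast, avoids induction entirely: it builds the central interval $\left[-\binom{\lfloor(n-15)/3\rfloor}{2},\binom{\lfloor(n-15)/3\rfloor}{2}\right]$ by explicit partition constructions whose shapes are known (Theorem~\ref{x}, Section~3), applies Lemma~\ref{l} exactly once with $\lambda_1=\lfloor n/3\rfloor$ to bridge up to $\binom{\lfloor n/3\rfloor}{2}$, and then quotes Theorem~\ref{Kra} as a black box for the top segment $\left[\binom{\lfloor n/3\rfloor}{2},\binom{\lfloor(2n+1)/3\rfloor}{2}\right]$; no shape information about Kravchuk's realizers is ever needed, because that theorem is applied to $n$ itself and never to tails.

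The genuine gap is your base case, and it is not a deferrable detail. Your step consumes $Q(r)$ for all $r\in[n-m,\,n-\lceil 2n/5\rceil]\approx[n/3,\,3n/5]$, so the base band must be of the form $[n_1,3n_1]$, and on it you must establish $Q$ itself, i.e.\ realizability \emph{with} the width and height bounds. Theorems~\ref{EK} and~\ref{Kra} cannot seed this invariant: they assert only that certain integers are eigenvalues and give no information about the shapes of realizing partitions, so their realizers can neither be certified to satisfy $Q$ nor be used as tails under a first row. Worse, your claim that these two theorems leave only ``a bounded middle range'' is false: for a base value $n$ the uncovered stretch between $n$ (Theorem~\ref{EK}) and $x=\binom{\lceil n/3\rceil+1}{2}-2\left(\lfloor 2n/3\rfloor-1\right)$ (Theorem~\ref{Kra}) has length of order $n^2/18$, and it is nonempty for every $n\ge 48$ --- this middle range is precisely the content of the conjecture, not a fringe. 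So the ``finitely many cases'' you propose to check directly amount, for each of the roughly $2n_1$ base values, to producing on the order of $n_1^2$ shape-constrained realizers de novo; that is exactly the work the paper does by hand through the case-by-case constructions of Section~3, for which your proposal contains no substitute. Until that band is actually produced, the induction has nothing to stand on.
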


In this paper, we prove that this conjecture holds.

\begin{theorem}\label{c}
For $n\ge 76$, all integers in the interval 
\[ \left[-{\lfloor(2n+1)/3 \rfloor\choose 2},~ {\lfloor(2n+1)/3 \rfloor\choose 2}\right] \]
 are eigenvalues of $Cay(S_n,T_n)$.
\end{theorem}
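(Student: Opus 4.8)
The plan rests on the description of the spectrum of $Cay(S_n,T_n)$ coming from the Jucys--Murphy elements: the eigenvalues are indexed by the partitions $\lambda\vdash n$, the eigenvalue attached to $\lambda$ being the content sum $c(\lambda)=\sum_{(i,j)\in\lambda}(j-i)$, occurring with multiplicity $(f^\lambda)^2$; this is the source of the integrality recorded in \cite{KY}. Thus Theorem~\ref{c} is equivalent to the purely combinatorial assertion that every integer of the interval $\bigl[-\binom{m}{2},\binom{m}{2}\bigr]$, with $m=\lfloor(2n+1)/3\rfloor$, is realised as $c(\lambda)$ for some $\lambda\vdash n$. Since transposing a diagram replaces each content $j-i$ by $i-j$, we have $c(\lambda')=-c(\lambda)$; the spectrum is symmetric about $0$ and it suffices to realise every integer in $[0,\binom{m}{2}]$.

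First I would invoke the two results already available. Theorem~\ref{EK} disposes of $[0,n]$, and Theorem~\ref{Kra} disposes of $[x,\binom{m}{2}]$, where $x=\binom{\lceil n/3\rceil+1}{2}-2(\lfloor 2n/3\rfloor-1)$. Hence the only integers left to produce are those of the middle range $(n,x)$, whose length is of order $n^2$ (one checks $x\sim n^2/18$ and that $x>n$ already at $n=76$). The whole difficulty is therefore concentrated in exhibiting, for each target $\tau$ with $n<\tau<x$, a single partition of $n$ whose contents sum to exactly $\tau$.

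For the construction I would pass to Frobenius coordinates. Writing $\lambda$ through its arms $a_1>\cdots>a_r\ge 0$ and legs $b_1>\cdots>b_r\ge 0$, so that $n=\sum_i(a_i+b_i+1)$, the content sum factorises as $c(\lambda)=\sum_i\binom{a_i+1}{2}-\sum_i\binom{b_i+1}{2}$. I would fix the legs to the minimal staircase $b_i=r-i$, which uses $\binom{r}{2}$ cells and contributes a constant $-\binom{r+1}{3}$, and then use the arms as the tuning parameters: the target becomes to choose distinct $a_1>\cdots>a_r\ge 0$ with prescribed sum $\sum_i a_i=n-r-\binom{r}{2}$ realising $\sum_i\binom{a_i+1}{2}=\tau+\binom{r+1}{3}$. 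Because $\sum_i\binom{a_i+1}{2}=\tfrac12\bigl(\sum_i a_i^2+\sum_i a_i\bigr)$ and $\sum_i a_i$ is fixed, this is exactly the problem of which values $\sum_i a_i^2$ attains as $(a_i)$ runs over partitions of a fixed integer into $r$ distinct nonnegative parts. Here $r$ is the coarse parameter, setting the scale of the attainable contents ($r$ of order $\sqrt n$ for the small $\tau$, decreasing as $\tau$ grows toward $x$), while rebalancing the arms provides the fine adjustment.

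The heart of the argument, and the step I expect to be hardest, is to prove that there are no gaps: for a suitable range of $r$, the set of attainable values of $\sum_i\binom{a_i+1}{2}$ is an entire interval of integers, and the intervals produced by consecutive values of $r$ overlap, so that their union covers all of $(n,x)$. Establishing that a single $r$ yields a full interval requires showing that the distinct-part arm vectors can be deformed so that $\sum_i\binom{a_i+1}{2}$ moves in unit steps without ever violating distinctness or the fixed-sum constraint. This is delicate precisely because no single-cell move of a Young diagram changes $c(\lambda)$ by exactly $1$ (one computes that such a move alters the content by $(\lambda_j-\lambda_i)+(i-j)+1$, which is never $\pm1$ for a legal move); the unit steps must therefore come from genuinely different arm vectors. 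Controlling the overlaps, handling the endpoint $\binom{m}{2}$ forced by the floor $\lfloor(2n+1)/3\rfloor$, and carrying out the parity bookkeeping for $\sum_i a_i^2$ is what ultimately pins down the explicit threshold $n\ge 76$.
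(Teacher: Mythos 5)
Your setup is sound: the content-sum description of the spectrum, the reduction by symmetry to $[0,\binom{m}{2}]$, the use of Theorems~\ref{EK} and~\ref{Kra} to dispose of $[0,n]$ and $[x,\binom{m}{2}]$, and the Frobenius-coordinate identity $c(\lambda)=\sum_i\binom{a_i+1}{2}-\sum_i\binom{b_i+1}{2}$ are all correct (the latter is exactly the arm/leg decomposition of Theorem~\ref{eigencal}). But the proposal has a genuine gap, and it is precisely where you say you expect it: the claim that, for fixed staircase legs and fixed arm-sum $N=n-r-\binom{r}{2}$, the attainable values of $\sum_i\binom{a_i+1}{2}$ fill an entire interval, with consecutive values of $r$ producing overlapping intervals covering $(n,x)$. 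This is not a routine verification and you give no argument for it. Note that with $\sum_i a_i$ fixed, realising consecutive targets means changing $\sum_i a_i^2$ by exactly $2$; the elementary exchange $(a_i,a_j)\mapsto(a_i+1,a_j-1)$ changes $\sum a^2$ by $2(a_i-a_j+1)$, which equals $\pm 2$ only when $a_i=a_j$ or $a_i=a_j-2$, and both cases violate distinctness of the resulting arms. So unit steps require coordinated moves of three or more arms, and for small parameters the value set genuinely has gaps (e.g.\ distinct triples summing to $8$ give $\sum a^2\in\{26,30,34,40,50\}$ only). Proving gaplessness in the relevant range, plus the overlap and endpoint bookkeeping, is the entire difficulty of the theorem; without it you have an outline, not a proof.

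For comparison, the paper never attacks the middle range $(n,x)$ head-on with a single parametric family. It first proves Theorem~\ref{x} --- every integer in $\left[-\binom{\lfloor(n-15)/3\rfloor}{2},\binom{\lfloor(n-15)/3\rfloor}{2}\right]$ is an eigenvalue --- by an extensive case analysis (all of Section~3: many explicit families of partitions, split by parity of $n-a$ and by ranges of the correction term $c$, which is the concrete counterpart of the interval-filling you left unproved). It then bridges the remaining gap $\left[\binom{\lfloor(n-15)/3\rfloor}{2},\binom{\lfloor n/3\rfloor}{2}\right]$ not by new constructions but by Lemma~\ref{l}: prepending a long first row $\lambda_1=\lfloor n/3\rfloor$ to the partitions of $n'=n-\lambda_1$ from Theorem~\ref{x} shifts the whole known interval upward by $\binom{\lambda_1}{2}-n'$, and for $n\ge 76$ the shifted interval lands so as to close the gap. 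That translation trick is the idea your plan is missing; it converts an already-proved interval at a smaller symmetric group into the hard middle range at $S_n$, and is far cheaper than making your distinct-parts analysis work.
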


To this end, we establish the following result, which is a quadratic improvement (provided $n \ge 45$) of the linear interval $[-n, n]$ in Theorem \ref{EK}.
\begin{theorem}\label{x}
For $n\ge 27$, all integers in the interval $[-{\lfloor(n-15)/3 \rfloor\choose 2}, {\lfloor(n-15)/3 \rfloor\choose 2}]$ are eigenvalues of $Cay(S_n,T_n)$.
\end{theorem}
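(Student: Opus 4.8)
The plan is to understand the eigenvalues of the transposition graph through the representation theory of the symmetric group. The eigenvalues of $Cay(S_n, T_n)$ are indexed by partitions $\lambda \vdash n$, and for each such $\lambda$ the corresponding eigenvalue equals the content sum $\sum_{(i,j)\in\lambda}(j-i)$, which is exactly the value by which the sum of all transpositions (a central element of the group algebra) acts on the irreducible representation $S^\lambda$. So the task reduces to a purely combinatorial question: which integers arise as content sums of partitions of $n$, and in particular, can we realize every integer in a large symmetric interval?

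First I would fix notation for the content sum $c(\lambda) = \sum_{(i,j)\in\lambda}(j-i)$ and record the elementary facts that $c(\lambda')=-c(\lambda)$ for the conjugate partition and that $c$ behaves predictably when a single box is added or moved. Because of the conjugation symmetry, it suffices to produce every nonnegative integer up to $\binom{\lfloor(n-15)/3\rfloor}{2}$ as a content sum, and then reflect. The natural building block is a partition whose content sum is large and positive — for a ``staircase''-type or near-triangular shape the content sum grows quadratically in the number of rows, which is what creates the $\binom{k}{2}$-scale bound. I would aim to choose a family of base shapes, each using only a bounded portion of the $n$ boxes, so that the remaining boxes can be arranged into a flexible ``reservoir'' that lets me tune the content sum in steps of $1$.

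The central mechanism is to start from a partition achieving (or exceeding) the target content value and then repeatedly decrease the content sum by exactly $1$ via a local move — deleting a box from one row and appending it to a row one unit shorter, or more simply sliding a corner box inward — while staying within the set of partitions of $n$. Each such admissible move changes $c$ by a controlled amount, and the key is to guarantee that one can always realize a step of $\pm 1$ until the entire interval $[0, \binom{\lfloor(n-15)/3\rfloor}{2}]$ is covered. The constant $15$ (and the floor by $3$) strongly suggests reserving a fixed-size block of boxes to act as a fine adjustment mechanism and splitting $n$ into three roughly equal parts to build the quadratically-growing backbone, so I would look for a construction that consumes about $(n-15)/3$ boxes in a triangular arrangement and uses the leftover boxes for $\pm1$ fine-tuning.

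The hard part will be the gap-freeness: proving that the content sums realizable under the allowed moves form an unbroken run of consecutive integers with no missing values, rather than merely showing the endpoints are attained. This requires a careful inductive or greedy argument showing that from any realized value strictly below the maximum there is always a legal partition of $n$ whose content sum is exactly one less (or one more), which in turn forces the explicit inequalities on $n$ — likely the source of the threshold $n\ge 27$. I would expect to verify the base cases and the ``always one more box to move'' invariant by bounding the shape parameters, and to double-check the boundary behavior near $c=0$ and near the maximum $\binom{\lfloor(n-15)/3\rfloor}{2}$ where the reservoir of adjustable boxes is smallest.
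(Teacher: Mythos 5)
Your reduction is the right one and matches the paper's starting point: eigenvalues of $Cay(S_n,T_n)$ are exactly the content sums $\sum_{(i,j)\in\lambda}(j-i)$ over partitions $\lambda\vdash n$, and by symmetry (conjugation, or bipartiteness) it suffices to realize every integer in $\left[0,\binom{\lfloor(n-15)/3\rfloor}{2}\right]$. But from that point on your proposal is a plan rather than a proof, and the plan's central mechanism is the part that actually fails. Realizing every integer is precisely the theorem's entire content, and the paper does it by explicit, case-by-case constructions: the interval is decomposed as $\bigcup_a\left[\binom{a-1}{2},\binom{a}{2}\right]$, the cases $n-a$ odd and even are treated separately, and for each target $\binom{a}{2}-c$ (with $c$ ranging over several sub-intervals, each with its own partition family and its own parameter constraints) a bespoke partition is written down whose arms and legs cancel except for boxes summing to the target. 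Your ``local move'' mechanism --- slide one corner box to an addable position one content-unit lower --- does not survive contact with the actual shapes. For example, the natural partition realizing $\binom{a}{2}$, namely $\lambda=\left(\tfrac{1}{2}(n-a+1),\,a+1,\,1\times\tfrac{1}{2}(n-a-3)\right)$, admits \emph{no} single-box move changing the content sum by $-1$: its removable corners have contents $\lambda_1-1$, $a-1$, $1-k$ and its addable positions have contents $\lambda_1$, $a$, $-1$, $-k$ (with $k$ the number of rows), and in the relevant range $a\le\lfloor(n-15)/3\rfloor$ one checks every legal move changes the sum by at least $2$ in absolute value. Indeed, the paper's partition for $\binom{a}{2}-1$ differs from the one for $\binom{a}{2}$ by relocating three boxes, not one. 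So the ``gap-freeness'' you flag as the hard part is not just hard; the specific step-by-one connectivity you propose is false as stated, and any repair amounts to inventing the multi-box constructions the paper tabulates.

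Two smaller inaccuracies in your sketch point the same way. A staircase $(k,k-1,\dots,1)$ is self-conjugate, so its content sum is $0$, not quadratically large; the quadratic building block is a single long row (or long arm), which is why every construction in the paper hangs a row of length $a+1$ below a first row/column pair of equal length that cancels itself. And the factor $1/3$ does not come from splitting $n$ into three triangular parts: it comes from the feasibility constraint that the cancelling first row, of length about $\tfrac{1}{2}(n-a)$, must be at least as long as the row of length $a+1$ below it, forcing $a\lesssim n/3$. These are the inequalities that generate the thresholds like $n\ge 27$ and $a\le\lfloor(n-15)/3\rfloor$, and they only emerge once the explicit partitions are on the table.
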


Our approach is structured as follows. After presenting preliminary results in Section 2, we proceed to prove Theorem~\ref{x} in Section~\ref{main}. By Theorem~\ref{Kra}, all integers in the interval $\left[{\lfloor n/3 \rfloor\choose 2}, {\lfloor (2n+1)/3\rfloor\choose 2}\right]$ are known to be eigenvalues of $\mathrm{Cay}(S_n, T_n)$. Since the spectrum of $\mathrm{Cay}(S_n, T_n)$ is symmetric about $0$, it remains to show that all integers in the interval $\left[{\lfloor (n - 15)/3 \rfloor\choose 2}, {\lfloor n/3 \rfloor\choose 2} \right]$ also appear as eigenvalues. This will be established in Section~4.

\section{Preliminaries}

Let $G$ be a finite group and $S\subseteq G$ be an inverse closed subset of $G$. The Cayley graph $Cay(G,S)$ is {\em normal} if $S$ is a union of conjugacy classes of $G$. It is well known that the eigenvalues of a normal Cayley graph can be calculated by the following expression.

\begin{theorem}\cite{Babai,DS,Lubotzky,Murty}\label{eigen}
The eigenvalues of a normal Cayley graph $Cay(G,S)$ are given by
\[
\rho_\chi=\sum_{g\in S}\frac{\chi(g)}{\chi(1)},
\]  
where $\chi$ ranges over all the irreducible characters of $G$ and $1$ is the identity of $G$.
\end{theorem}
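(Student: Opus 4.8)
The plan is to realize the adjacency matrix $A$ of $Cay(G,S)$ as an element of the group algebra $\mathbb{C}[G]$ acting by multiplication, and then to diagonalize it using the Wedderburn decomposition together with Schur's lemma. First I would index the rows and columns of $A$ by the elements of $G$, so that $A_{f,g}=1$ precisely when $fg^{-1}\in S$. Setting $\sigma=\sum_{s\in S}s\in\mathbb{C}[G]$, a direct check shows that $A$ is exactly the matrix, in the natural basis $\{g:g\in G\}$ of $\mathbb{C}[G]$, of the operator $x\mapsto\sigma x$ (left multiplication by $\sigma$): the column of $g$ records $\sigma g=\sum_{s\in S}sg$, and $sg=f$ is equivalent to $fg^{-1}=s\in S$. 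Thus the eigenvalues of $A$ coincide with the eigenvalues of $\sigma$ acting on the regular representation.

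The central point is that normality forces $\sigma$ to lie in the center of $\mathbb{C}[G]$. Since $S$ is a union of conjugacy classes, each class sum $\sum_{s\in C}s$ is fixed under conjugation by every $g\in G$, hence commutes with every group element, and therefore $\sigma$ — being a sum of such class sums — is central. Now I would invoke the Wedderburn isomorphism $\Phi:\mathbb{C}[G]\xrightarrow{\sim}\bigoplus_\chi\mathrm{End}(V_\chi)$, where $\chi$ runs over the irreducible characters of $G$ and $V_\chi$ is the corresponding module of dimension $\chi(1)$; this isomorphism sends $g$ to the tuple $(\rho_\chi(g))_\chi$ of its images in the irreducible representations.

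By Schur's lemma, the central element $\sigma$ acts on each irreducible $V_\chi$ as a scalar $\lambda_\chi\cdot\mathrm{Id}$. Taking traces of $\rho_\chi(\sigma)=\sum_{s\in S}\rho_\chi(s)$ gives $\lambda_\chi\,\chi(1)=\sum_{s\in S}\chi(s)$, so that
\[
\lambda_\chi=\frac{1}{\chi(1)}\sum_{s\in S}\chi(s)=\sum_{s\in S}\frac{\chi(s)}{\chi(1)}=\rho_\chi.
\]
It remains to transport this back to the full regular representation. Under $\Phi$, left multiplication by $\sigma$ on $\mathbb{C}[G]$ becomes left multiplication by $\Phi(\sigma)=(\lambda_\chi\,\mathrm{Id}_{V_\chi})_\chi$ on $\bigoplus_\chi\mathrm{End}(V_\chi)$; on the block $\mathrm{End}(V_\chi)$ this is simply scaling by $\lambda_\chi$. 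Hence $A$ acts as the scalar $\rho_\chi$ on a subspace of dimension $\chi(1)^2$, and collecting these contributions over all $\chi$ yields exactly $\sum_\chi\chi(1)^2=|G|$ eigenvalues, accounting for the entire spectrum.

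I expect the main obstacle to be bookkeeping rather than conceptual: one must fix the adjacency convention and verify cleanly that $A$ is left (equivalently, since $\sigma$ is central, right) multiplication by $\sigma$, and one must check that the scalar $\lambda_\chi$ extracted on a single copy of $V_\chi$ is genuinely the eigenvalue of $A$ on the entire $\chi(1)^2$-dimensional isotypic block of the regular representation. Everything else follows from the two standard structural facts — the Wedderburn decomposition of $\mathbb{C}[G]$ and Schur's lemma — so the argument is short once these are in place.
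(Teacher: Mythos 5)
Your proof is correct: the identification of the adjacency matrix with left multiplication by the central element $\sigma=\sum_{s\in S}s$, followed by Schur's lemma on each Wedderburn block $\mathrm{End}(V_\chi)$ and the trace computation $\lambda_\chi\,\chi(1)=\sum_{s\in S}\chi(s)$, is exactly the standard argument, and you even recover the multiplicities $\chi(1)^2$, which is more than the statement asks. The paper itself gives no proof of this theorem --- it is quoted from the classical references (Babai, Diaconis--Shahshahani, Lubotzky, Murty) --- and your argument coincides with the proof found there, so there is nothing to reconcile.
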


A {\em partition} $\lambda$ of $n$, denoted by $\lambda \vdash n$, is a weakly decreasing sequence of positive integers $\lambda_1\ge \dots\ge \lambda_k$ with $\sum_{i=1}^k\lambda_i=n$. We write $\lambda=(\lambda_1,\dots,\lambda_k) \vdash n$. We also use the notation $(\mu_1\times t_1,\dots,\mu_r\times t_r)\vdash n$ to 
denote the partition where the parts $\mu_1 \ge \dots \ge \mu_r$  are repeated $t_1,\dots,t_r$ times respectively. 

It is well known that there is a bijection between the irreducible characters of $S_n$ and the partitions of $n$ \cite{Sagan}. 
Since $Cay(S_n,T_n)$ is normal, the eigenvalues $\rho_\lambda$ of $Cay(S_n,T_n)$ are indexed by $\lambda\vdash n$.
From Eq.~(5.6) in \cite[p.140]{Murnahan} and Theorem \ref{eigen}, for a partition $\lambda=(\lambda_1,\dots,\lambda_k)\vdash n$, its corresponding eigenvalue is equal to 
\[
\rho_\lambda=\sum_{i=1}^k\frac{\lambda_i(\lambda_i-2i+1)}{2}.
\]

Given a partition $\lambda=(\lambda_1,\dots,\lambda_k)\vdash n$, the Young diagram of $\lambda$ is a collection of $n$ boxes arranged in left-justified rows, with row lengths given by $\lambda_1$, $\ldots$, $\lambda_k$. The $(i, j)$-th box of the Young diagram is the box in the $i$-th row (from top) and the $j$-th column (from left). Let $T_\lambda$ be the generalized tableau of shape $\lambda$ by filling $t_{ij}:= j-i$ in the $(i, j)$-th box of the Young diagram. Note that the sum of $i$-th row  of $T_{\lambda}$ is then equal to \[
\sum_{j=1}^{\lambda_i}t_{ij} = \sum_{j=1}^{\lambda_i}(j-i)=\frac{\lambda_i(\lambda_i-2i+1)}{2},
\]
and so the sum of all the rows of $T_{\lambda}$ is equal to $\rho_\lambda$. This gives another way to describe the eigenvalues of $Cay(S_n,T_n)$. 

\begin{theorem}\label{eigencal}
The eigenvalues of  the transposition graph $Cay(S_n,T_n)$ are given by the sum of all the boxes of $T_\lambda$, where $\lambda$ ranges over all the partitions of $n$.
\end{theorem}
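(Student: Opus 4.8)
The plan is to observe that this theorem is a direct reformulation of the eigenvalue formula already recorded above, so the proof amounts to verifying a single arithmetic identity and then invoking the indexing of eigenvalues by partitions.

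First I would recall the two ingredients supplied by the preliminaries. By Theorem \ref{eigen} together with the standard bijection between the irreducible characters of $S_n$ and the partitions $\lambda \vdash n$, the eigenvalues of $Cay(S_n, T_n)$ are exactly the numbers $\rho_\lambda$ as $\lambda$ ranges over all partitions of $n$. By Eq.~(5.6) of \cite{Murnahan} applied through Theorem \ref{eigen}, each such eigenvalue has the closed form $\rho_\lambda = \sum_{i=1}^k \frac{\lambda_i(\lambda_i - 2i + 1)}{2}$.

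Next I would compute the total box sum of the tableau $T_\lambda$ and match it to $\rho_\lambda$. Since the $(i,j)$ box carries the entry $t_{ij} = j - i$, the sum along the $i$-th row telescopes to $\sum_{j=1}^{\lambda_i}(j - i) = \frac{\lambda_i(\lambda_i+1)}{2} - i\lambda_i = \frac{\lambda_i(\lambda_i - 2i + 1)}{2}$, which is precisely the $i$-th summand of $\rho_\lambda$. Summing over all rows $i = 1, \dots, k$ then shows that the sum of all boxes of $T_\lambda$ equals $\rho_\lambda$.

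Finally, combining these two observations gives the claim: as $\lambda$ ranges over all partitions of $n$, the quantity $\rho_\lambda$ ranges over all eigenvalues of $Cay(S_n, T_n)$, and each $\rho_\lambda$ equals the sum of all boxes of $T_\lambda$. I do not expect any genuine obstacle here, since the entire content lies in the elementary telescoping row-sum identity. The real value of the statement is conceptual rather than technical: it recasts the eigenvalue computation as a combinatorial box-filling problem, which is the form that will make the interval-covering constructions of Section~\ref{main} and Section~4 tractable, by allowing eigenvalues to be built up and adjusted one box at a time.
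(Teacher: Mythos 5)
Your proposal is correct and follows exactly the paper's own route: the paper likewise combines Theorem \ref{eigen} (with the character--partition bijection) and Murnaghan's closed form $\rho_\lambda=\sum_{i=1}^k\frac{\lambda_i(\lambda_i-2i+1)}{2}$, then verifies that each row of $T_\lambda$ sums to the corresponding term via the same telescoping identity. Your arithmetic $\sum_{j=1}^{\lambda_i}(j-i)=\frac{\lambda_i(\lambda_i+1)}{2}-i\lambda_i=\frac{\lambda_i(\lambda_i-2i+1)}{2}$ is exactly the computation the paper records, so there is nothing to add.
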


For a partition $\lambda$, the {\em $i$-th arm} of $T_{\lambda}$ is the set of all the boxes (together with the fillings) in the $i$-th row of $T_{\lambda}$ that are on the right-hand side of the $(i, i)$-th box. Similarly, the {\em $i$-th leg} corresponds to all the boxes in the $i$-th column of $T_{\lambda}$ that are below the $(i, i)$-th box. Theorem \ref{eigencal} says that the eigenvalue $\rho_\lambda$ can be obtained by summing across all the arms and legs of $T_{\lambda}$; see Figure \ref{fig:eigenvalue} for an example.

\begin{figure}[h!]
\centering
\[
T_\lambda = \begin{ytableau}
0 &  1 & 2 & 3 \\
-1 &  0 & 1 \\
 -2
\end{ytableau}
~~~\longrightarrow~~~\rho_{\lambda} =\underbrace{1+2+3}_{\textnormal{1$^{\textnormal{st}}$ arm}}+ 
\underbrace{(-1-2)}_{\textnormal{1$^{\textnormal{st}}$ leg}}+ \underbrace{1}_{\textnormal{2$^{\textnormal{nd}}$ arm}} = 4.
\]
\caption{A generalized tableau $T_{\lambda}$ and its corresponding eigenvalue.}
\label{fig:eigenvalue}
\end{figure}

\section{Proof of Theorem \ref{x}} \label{main}

Since $Cay(S_n,T_n)$ is bipartite, to prove Theorem \ref{x}, it suffices to show that each integer in $\left[0,{\lfloor(n-15)/3\rfloor\choose 2}\right]$ is an eigenvalue of $Cay(S_n,T_n)$. By \cite[Lemmas 3--4]{KK}, both $0$ and $1$ are eigenvalues of $Cay(S_n,T_n)$. Given the decomposition
\[
\left[1,{\lfloor(n-15)/3\rfloor\choose 2}\right]=\bigcup_{a=3}^{\lfloor(n-15)/3\rfloor}\left[{a-1\choose 2},{a\choose 2}\right],
\]
it suffices to show that for any given $a=3,\dots,\lfloor\frac{1}{3}(n-15)\rfloor$, every integer in $\left[{a-1\choose 2}+1,{a\choose 2}\right]$ is an eigenvalue of $Cay(S_n,T_n)$.

Notice that ${a-1 \choose 2}+1 = {a \choose 2} - (a-2)$. Our strategy is to construct a partition that corresponds to the eigenvalue
\[ {a \choose 2} - c, \]
for $c = 0, 1, \ldots, a-2$. We split the cases based on the parity of $n-a$. Theorem \ref{x} follows immediately from the following results.

\begin{theorem}[The odd case]\label{x1}
Let $n\ge 24$. For each integer $a=3,\dots,\lfloor\frac{1}{3}(n-15) \rfloor$ with $n-a\equiv 1\pmod 2$, all integers in the interval $\left[{a-1\choose 2}+1,{a\choose 2}\right]$ are eigenvalues of $Cay(S_n,T_n)$. 
\end{theorem}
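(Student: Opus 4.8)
The plan is to work entirely with the content-sum description of the eigenvalues from Theorem~\ref{eigencal}: writing each box's content as $j-i$, the eigenvalue attached to $\lambda$ is $\rho_\lambda=\sum_{(i,j)\in\lambda}(j-i)$. Passing to Frobenius (diagonal) coordinates, in which $\lambda$ is recorded by its Durfee size $d$ together with the arm and leg lengths $\alpha_1>\cdots>\alpha_d\ge 0$ and $\beta_1>\cdots>\beta_d\ge 0$ of the successive diagonal cells, this becomes $\rho_\lambda=\sum_{i=1}^{d}\left[\binom{\alpha_i+1}{2}-\binom{\beta_i+1}{2}\right]$ while the size is $n=\sum_{i=1}^{d}(\alpha_i+\beta_i+1)$. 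In these coordinates a diagonal cell with $\alpha_i=\beta_i$ contributes $0$ to $\rho_\lambda$, and a self-conjugate partition has eigenvalue $0$. By Theorem~\ref{eigencal} it then suffices to realise every integer in $\left[\binom{a-1}{2}+1,\binom a2\right]$, that is every value $\binom a2-c$ with $c=0,\dots,a-2$ (using $\binom{a-1}{2}+1=\binom a2-(a-2)$), as the content sum of a genuine partition of $n$.

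With this in hand I would construct the required partitions explicitly, splitting the diagonal cells into a large \emph{balanced core} and a small \emph{active head}. For the core I take the cells $i\ge 3$ to be balanced, $\alpha_i=\beta_i$, arranged as a self-conjugate staircase; this contributes exactly $0$ to $\rho_\lambda$ and absorbs the bulk of the $n$ boxes. The head consists of the first two diagonal cells: the first carries a long arm $\alpha_1\approx a-1$, which injects the principal term $\binom a2$, while the second serves as a fine tuner. Taking the second cell with leg exceeding arm by one, $\beta_2=\alpha_2+1$, makes its contribution equal to $-(\alpha_2+1)$, which can be set to any prescribed positive integer and so moves $\rho_\lambda$ downward in steps of size one. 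One then reads off explicit formulas for $\alpha_1,\beta_1,\alpha_2,\beta_2$ and for the core (equivalently its length $d$) as functions of $a$ and $c$, and verifies by direct computation that $\rho_\lambda=\binom a2-c$ and that the total number of boxes is exactly $n$. The parity split enters precisely here: the tuner is most naturally built from a gadget occupying an \emph{even} number of cells, so the residual count the coarser core must supply has a parity tied to $n-a$; matching this parity calls for two slightly different base configurations, and Theorem~\ref{x1} handles the case $n-a$ odd.

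The remaining bookkeeping is to confirm that the prescribed data define a partition for every $c$ in the range: one checks the strict inequalities $\alpha_1>\alpha_2>\alpha_3$ and $\beta_1>\beta_2>\beta_3$ together with nonnegativity, so that the arm and leg sequences are legitimate Frobenius data, and that the size stays pinned to $n$ throughout the sweep $c=0,\dots,a-2$. These inequalities require the core to be large compared with the head, which is exactly what the hypothesis $a\le\lfloor(n-15)/3\rfloor$ guarantees, since it forces $n-a\ge 2a+15$ and hence leaves ample room; the condition $n\ge 24$ disposes of the small cases.

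The step I expect to be the main obstacle is obtaining \emph{every} integer in $\left[\binom a2-(a-2),\binom a2\right]$, rather than a sparse subset. A single active diagonal cell produces contributions spaced like triangular numbers, so there is a genuine arithmetic barrier to unit resolution; this is the reason two independent parameters are needed in the head, the second supplying steps of exactly one. The delicate point is that the unit tuner, the auxiliary leg $\beta_1$, and the core size must be balanced against one another so that the eigenvalue lands on the target while the box count remains exactly $n$ and the shape remains monotone for \emph{all} $c$ at once, not merely for generic values. Verifying this joint feasibility uniformly across the whole interval, which is what the explicit closed forms and the parity bookkeeping are designed to achieve, is the technical heart of the argument.
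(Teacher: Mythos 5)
Your reduction to realizing $\binom{a}{2}-c$ for $c=0,\dots,a-2$, and your Frobenius-coordinate formula $\rho_\lambda=\sum_i\bigl[\binom{\alpha_i+1}{2}-\binom{\beta_i+1}{2}\bigr]$, are both correct, and the cancellation philosophy (balanced cells contribute $0$, one long arm injects $\binom{a}{2}$, a cell with leg exceeding arm by one contributes $-(\alpha+1)$) is exactly the paper's. But your architecture is inconsistent with the monotonicity of Frobenius data, and this is fatal. Since $\alpha_1>\alpha_2>\alpha_3>\cdots$ and $\beta_1>\beta_2>\beta_3>\cdots$, every cell placed after your tuner cell must have arm and leg strictly smaller than the tuner's, i.e.\ smaller than about $c$. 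So your ``core'' of cells $i\ge 3$ can absorb at most $\sum_{k=0}^{\alpha_2-1}(2k+1)=\alpha_2^2\approx (c-1)^2$ boxes, while the head holds $O(a+c)$ boxes. For small $c$ (say $c=2$, which is needed for every $a$) the whole shape then has $O(a)=O(n)$ boxes with a constant well below $1$, so it can never have size $n$; for $a=3$ the design fails outright. The bulk absorber must be the \emph{first} diagonal cell, the largest one, exactly as in the paper's constructions, where the balanced cell has arm $=$ leg $\approx\frac12(n-a)$ and the long arm $a-1$ hangs on the \emph{second} cell (Lemma \ref{odd}). A second, related defect: once there is a second cell you are forced to take $\beta_1\ge 1$, so cell $1$ contributes $\binom{\alpha_1+1}{2}-\binom{\beta_1+1}{2}$, and with $\alpha_1\approx a-1$ this cannot equal $\binom{a}{2}$ unless $\beta_1=0$; your ``auxiliary leg $\beta_1$'' has no mechanism by which it gets cancelled, since the core is already committed to contributing $0$.

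Even if you flip the design and put the balanced bulk cell first (which repairs the box count), you then meet the actual technical obstruction of this theorem, which your proposal never engages: when $a\gtrsim n/5$, the balanced cell's arm $\approx\frac12(n-a-2c)$ drops below the active arm $a-1$, monotonicity fails, and the simple three-block construction breaks down. This is precisely why the paper splits the range of $a$: for $a\le\lfloor\frac15(n+3)\rfloor$ the simple partitions of Lemmas \ref{odd}--\ref{odd2} suffice (Corollary \ref{corodd}), while for $\lceil\frac15(n+3)\rceil\le a\le\lfloor\frac13(n-15)\rfloor$ it needs genuinely different partitions whose first two parts are $a,a$ or $a,a-1$, with a further case split on $c$ relative to $b=\frac12(n-3a-1)$ (Lemmas \ref{odd3}--\ref{odd4}, Tables \ref{tab:table1}--\ref{tab:table2}). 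Your remark that $a\le\lfloor\frac13(n-15)\rfloor$ ``leaves ample room'' is exactly the point that fails in that regime, and asserting that the bookkeeping works ``for all $c$ at once'' is the statement to be proved, not an argument for it.
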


\begin{theorem}[The even case]\label{x2}
Let $n\ge 27$. For each integer $a=3,\dots,\lfloor\frac{1}{3}(n-15)\rfloor$ with $n - a\equiv 0\pmod 2$, all integers in the interval $\left[{a-1\choose 2}+1,{a\choose 2}\right]$ are eigenvalues of $Cay(S_n,T_n)$. 
\end{theorem}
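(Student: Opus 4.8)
The plan is to prove the odd and even cases by the same mechanism, so I will describe the even case, Theorem~\ref{x2} (the odd case is identical after shifting one parameter). Fix $a$ with $3\le a\le \lfloor\frac13(n-15)\rfloor$ and $n-a$ even, and fix $c\in\{0,1,\dots,a-2\}$. By the reduction already recorded it suffices to exhibit one partition $\lambda\vdash n$ with $\rho_\lambda={a\choose 2}-c$. The first step I would take is to peel off a first row of length $a$: writing $\lambda=(a,\nu)$ with $\nu\vdash n-a$ and $\nu_1\le a$, Theorem~\ref{eigencal} gives
\[
\rho_\lambda={a\choose 2}+\big(\rho_\nu-|\nu|\big)={a\choose 2}-\sigma_\nu,\qquad \sigma_\nu:=|\nu|-\rho_\nu=\sum_{(i,j)\in\nu}(i+1-j),
\]
since every cell of $\nu$ sits one row lower in $\lambda$ than in $\nu$, which lowers its content by $1$. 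Thus the task reduces to the smaller instance: for each $c\in\{0,\dots,a-2\}$ produce $\nu\vdash n-a$ with $\nu_1\le a$ and $\sigma_\nu=c$.

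To build such a $\nu$ I would combine a ``$\sigma$-neutral'' bulk with a small tuning piece. A direct computation shows that the $p\times(p+2)$ rectangle $R_p$ ($p$ rows of length $p+2$) satisfies $\rho_{R_p}=|R_p|=p(p+2)$, hence $\sigma_{R_p}=0$; these rectangles absorb almost all of the $n-a$ cells at no cost to $\sigma_\nu$, and taking $p\approx\sqrt{n-a}$ keeps the width $p+2$ well below $a$ (here the point is that $a$ is of order $n/3\gg\sqrt n$). Starting from $R_p$ with $p$ chosen so that $p(p+2)$ is just below $n-a$, I would distribute the remaining $n-a-p(p+2)$ cells along the lower boundary and make a bounded, local modification of the rim so that the added cells contribute exactly $c$ to $\sigma_\nu$ while the total size is exactly $n-a$. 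Which rim configurations are simultaneously available at the correct size is controlled by the parity of $n-a$, and this is exactly why the even and odd cases are separated: in the even case a single adjustment (passing to a block $R_p$ with $p$ of the appropriate parity) makes the bookkeeping close up.

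Finally I would verify that the resulting $\nu$ is a genuine partition of $n-a$ with $\nu_1\le a$ and $\sigma_\nu=c$, so that $\lambda=(a,\nu)\vdash n$ satisfies $\rho_\lambda={a\choose 2}-c$, and I would check this uniformly for every admissible $a$ and every $c\in\{0,\dots,a-2\}$; the threshold $n\ge 27$ and the slack encoded in the ``$15$'' are there precisely to guarantee enough room in these estimates and to dispose of the boundary regimes (small $a$, $a$ near $\lfloor\frac13(n-15)\rfloor$, and small $c$). The step I expect to be the real obstacle is the \emph{fine} control of $\sigma_\nu$: the eigenvalue cannot in general be nudged by a single unit via one cell move, and balanced modifications along the long boundary of a wide (small-$\sigma$) shape change $\sigma_\nu$ only in coarse steps of order $\sqrt{n-a}$. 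Hitting \emph{every} integer value $c\in[0,a-2]$ rather than a sparse subset therefore forces one to engineer the tuning cells near the main diagonal, where contents are $O(1)$, all while respecting the exact size constraint $|\nu|=n-a$ and the width bound $\nu_1\le a$. Managing this trade-off uniformly, and organising it by the parity of $n-a$, is where essentially all the work of Theorem~\ref{x2} lies.
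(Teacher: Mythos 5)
Your reduction identity is correct: for $\lambda=(a,\nu)$ with $\nu\vdash n-a$ and $\nu_1\le a$, one indeed has $\rho_\lambda={a\choose 2}-\sigma_\nu$ with $\sigma_\nu=|\nu|-\rho_\nu=\sum_{(i,j)\in\nu}(i+1-j)$. The problem is that this reduction is provably infeasible on most of the range of $a$ that the theorem covers. Forcing $\lambda_1=a$ and $\nu_1\le a$ means every part of $\lambda$ is at most $a$. Writing $m=n-a$, each row of $\nu$ contributes at most $\frac{(a+1)\nu_i}{2}$ to $\sum_{(i,j)}j$, while a majorization argument (cells pushed into the top rows minimize $\sum_{(i,j)}(i+1)$ subject to width $\le a$) gives $\sum_{(i,j)}(i+1)\ge\frac{(m-a)^2}{2a}$, hence
\[
\sigma_\nu\ \ge\ \frac{(m-a)^2}{2a}-\frac{(a+1)m}{2},
\]
which far exceeds $a-2$ whenever $m\gg a^2$, i.e.\ whenever $a$ is small compared to $\sqrt{n}$. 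In particular for $a=3$ (which the theorem must handle, e.g.\ $n=27$, $a=3$) every partition $\nu\vdash n-3$ with $\nu_1\le 3$ has $\sigma_\nu$ of order $n^2$, so no partition of the form $(3,\nu)$ can realize the eigenvalues ${3\choose 2}$ or ${3\choose 2}-1$. Your parenthetical justification that ``$a$ is of order $n/3\gg\sqrt n$'' is not available: $a$ ranges over all of $\{3,\dots,\lfloor\frac13(n-15)\rfloor\}$, and for $3\le a\lesssim\sqrt n$ even your $\sigma$-neutral rectangle $R_p$ with $p\approx\sqrt{n-a}$ already violates the width bound $\nu_1\le a$. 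This is exactly why the paper does something structurally different for small $a$: in Lemmas \ref{even}--\ref{even4} the constructed partitions have \emph{first} part roughly $\frac12(n-a)$ (far larger than $a$), with the long first arm cancelled against a long first leg built from parts equal to $1$ and $2$; partitions whose largest part equals $a$ appear only in the regime $a\ge\lceil\frac15(n+4)\rceil$, in Lemmas \ref{even5}--\ref{even6}.

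Second, even in the regime $a\gtrsim\sqrt n$ where your reduction could in principle be used, the proposal does not carry it out: exhibiting, for \emph{every} $c\in\{0,1,\dots,a-2\}$, a partition $\nu\vdash n-a$ with $\nu_1\le a$ and $\sigma_\nu=c$ exactly --- respecting the exact size constraint and the parity of $n-a$ --- is the entire content of the theorem, and you explicitly defer it (``where essentially all the work of Theorem~\ref{x2} lies''). The paper's proof consists precisely of that deferred work: explicit families of partitions (Lemmas \ref{even}--\ref{even6} and Tables \ref{tab:table3}--\ref{tab:table4}), organized by the ranges $2\le c\le b+1$ and $b+2\le c\le a-2$ with $b=\frac12(n-3a+2)$, each verified by an arm/leg cancellation computation. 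So what you have is a reduction that fails outright for small $a$, together with an honest statement of the unsolved core problem for large $a$ --- not a proof.
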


\subsection{The odd case}

In this subsection, assume that $n-a \equiv 1 \pmod 2$. Our goal is to construct a partition $\lambda = (\mu_1 \times t_1, \ldots, \mu_r \times t_r)  \vdash n$ that yields the desired eigenvalue. The various bounds on $n$ and $a$ ensure that the part sizes of $\lambda$ are valid. As the calculations are routine, they are omitted.

\begin{lemma}\label{odd}
Let $n\ge 7$. For each integer $a=2,\dots,\left\lfloor\frac{1}{3}(n-1)\right\rfloor$ with $n-a\equiv 1\pmod 2$, the partition \[
\left(\frac{1}{2}(n-a+1),a+1,1\times \frac{1}{2}(n-a-3)\right)
\] 
corresponds to the eigenvalue $\displaystyle {a\choose2}$ of $Cay(S_n, T_n)$.
\end{lemma}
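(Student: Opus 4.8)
The plan is to apply Theorem~\ref{eigencal} directly: compute the eigenvalue $\rho_\lambda$ as the sum of the fillings over all arms and legs of the generalized tableau $T_\lambda$, and show that this sum collapses to $\binom{a}{2}$.

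First I would fix notation by writing $m := \frac{1}{2}(n-a-3)$ for the number of parts equal to $1$, so that
\[
\lambda = \left(\tfrac{1}{2}(n-a+1),\, a+1,\, 1\times m\right)
\]
has exactly $m+2$ rows. Before computing, I would check that $\lambda$ is a genuine partition of $n$. The hypothesis $n-a\equiv 1\pmod 2$ makes $n-a+1$ and $n-a-3$ even, so both $\tfrac{1}{2}(n-a+1)$ and $m$ are integers; summing the parts gives $\tfrac{1}{2}(n-a+1)+(a+1)+m = n$; and the monotonicity $\tfrac{1}{2}(n-a+1)\ge a+1$ is equivalent to $a\le\tfrac{1}{3}(n-1)$, which is exactly the stated range of $a$ (the conditions $m\ge 0$ and $a+1\ge 2$ hold automatically in this range). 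These are the ``routine'' verifications alluded to in the surrounding text.

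The core of the argument is a single cancellation. Since $\lambda_i = 1 < i$ for every $i\ge 3$, the only diagonal boxes of $T_\lambda$ are $(1,1)$ and $(2,2)$, so $\rho_\lambda$ is the sum of just two arms and two legs. The second leg (below $(2,2)$ in column $2$) is empty, because column $2$ reaches only rows $1$ and $2$. The second arm occupies the boxes $(2,3),\dots,(2,a+1)$, whose fillings $1,2,\dots,a-1$ sum to $\binom{a}{2}$. The first arm occupies $(1,2),\dots,(1,\lambda_1)$ with fillings $1,\dots,\lambda_1-1$, while the first leg occupies $(2,1),\dots,(m+2,1)$ with fillings $-1,\dots,-(m+1)$. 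The key observation is that the length of the first column is the number of rows, $m+2$, and this coincides with $\lambda_1 = \tfrac{1}{2}(n-a+1) = m+2$; hence the first arm and first leg are negatives of one another, box by box, and cancel. Summing the four contributions yields $\rho_\lambda = \binom{a}{2}$, as claimed.

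There is no genuine obstacle here beyond careful bookkeeping; the content is the observation that the hook-like shape has been chosen so that the first row and first column have equal length, forcing the first arm and first leg to annihilate and leaving precisely the second arm. The only point requiring real attention is confirming that $\lambda$ is well-formed across the full range of $a$, which is exactly where the parity assumption and the bound $a\le\tfrac{1}{3}(n-1)$ are used.
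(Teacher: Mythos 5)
Your proof is correct and follows essentially the same route as the paper's: the key point in both is that the first arm and first leg of $T_\lambda$ have equal length (since $\lambda_1 = \tfrac{1}{2}(n-a+1)$ equals the number of rows) and hence cancel, leaving only the second arm, which sums to $\binom{a}{2}$. Your additional verifications (that $\lambda$ is a valid partition of $n$ in the stated range, and that the second leg is empty) are exactly the routine checks the paper says it omits.
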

\begin{proof}
Let $\lambda=\left(\frac{1}{2}(n-a+1),a+1,1\times \frac{1}{2}(n-a-3)\right)$ with $\frac{1}{2}(n-a-3) \ge 1$. Since the first arm and the first leg of $T_\lambda$ have the same size, these boxes cancel each other out and contribute nothing to the eigenvalue. Thus, the eigenvalue $\rho_\lambda$ is given by the sum of the boxes in the second arm of $T_{\lambda}$, i.e. 
\[
\rho_\lambda=\sum_{j=3}^{a+1}(j-2)={a\choose 2}.
\]
\end{proof}

\begin{lemma}\label{odd1}
Let $n\ge 9$. For each integer $a=2,\dots,\left\lfloor\frac{1}{3}(n-1)\right\rfloor$ with $n-a\equiv 1\pmod 2$, the partition \[
\left(\frac{1}{2}(n-a-1),a+1,3,1\times \frac{1}{2}(n-a-7)\right)
\] 
corresponds to the eigenvalue $\displaystyle {a\choose2}-1$ of $Cay(S_n, T_n)$.
\end{lemma}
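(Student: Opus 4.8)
The plan is to follow the proof of Lemma~\ref{odd} almost verbatim, working with the arm-and-leg description of $\rho_\lambda$ supplied by Theorem~\ref{eigencal}. Write $\lambda=\left(\frac12(n-a-1),\,a+1,\,3,\,1\times\frac12(n-a-7)\right)$. The parity hypothesis $n-a\equiv1\pmod 2$ makes both $\frac12(n-a-1)$ and $\frac12(n-a-7)$ integers, and the stated bounds on $a$ and $n$ guarantee that $\lambda$ is a genuine partition of $n$ (weakly decreasing parts, nonnegative number of trailing $1$'s); as in the surrounding lemmas I would dispatch this validity check as routine.

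The first step is the same observation that drives Lemma~\ref{odd}: counting rows gives $k=3+\frac12(n-a-7)=\frac12(n-a-1)=\lambda_1$, so $\lambda_1=k$. Hence the first arm and the first leg of $T_\lambda$ have equal length, their entries $1,2,\dots,\lambda_1-1$ and $-1,-2,\dots,-(k-1)$ cancel in pairs, and together they contribute $0$. I would then read off the second arm exactly as in Lemma~\ref{odd}: it consists of the boxes $(2,j)$ for $j=3,\dots,a+1$ with entries $j-2$, so it sums to $\sum_{j=3}^{a+1}(j-2)={a\choose 2}$.

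The only genuinely new ingredient is the third row, of length $3$. I would check that it produces no arm (its diagonal box $(3,3)$ is already its last box) and no leg below it in columns $2$ or $3$ (every later row has length $1$), so the single box it adds to the tableau's nonzero contributions is $(3,2)$, sitting in the second leg with entry $2-3=-1$. Summing the surviving pieces then gives $\rho_\lambda={a\choose 2}+(-1)={a\choose 2}-1$, as required. There is no real obstacle here beyond careful bookkeeping: the whole content is the identity $\lambda_1=k$ (which kills the first arm and leg) together with the verification that inserting the length-$3$ third row contributes exactly the one box $(3,2)$, thereby lowering the eigenvalue ${a\choose 2}$ of Lemma~\ref{odd} by precisely $1$.
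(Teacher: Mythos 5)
Your proof is correct and takes essentially the same route as the paper's: both cancel the equal-sized first arm and first leg (via $\lambda_1=k=\tfrac12(n-a-1)$), read off the second arm as $\sum_{j=3}^{a+1}(j-2)={a\choose 2}$, and note that the only other nonzero box is $(3,2)$ in the second leg, contributing $-1$. Your bookkeeping of the third row is in fact a bit more explicit than the paper's one-line computation, which contains a harmless typo (writing $2-j$ where $2-i$ is meant in the second-leg sum).
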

\begin{proof}
Let $\lambda=\left(\frac{1}{2}(n-a-1),a+1,3,1\times \frac{1}{2}(n-a-7)\right)$ with $\frac{1}{2}(n-a-7) \ge 0$. Since the first arm and the first leg of $T_\lambda$ have the same size, these boxes cancel each other out. Thus, the eigenvalue $\rho_\lambda$ is given by the sum of the second arm and leg of $T_{\lambda}$, i.e. 
\[
\rho_\lambda=\sum_{j=3}^{a+1}(j-2)+\sum_{i=3}^3(2-j)={a\choose 2}-1.
\]
\end{proof}

\begin{lemma}\label{odd2}
Let $n\ge 17$. For each integer $a=4,\dots,\left\lfloor\frac{1}{5}(n+3)\right\rfloor$ with $n-a\equiv 1\pmod 2$ and $b=2,\dots,a-2$, the partition \[
\left(\frac{1}{2}(n-a+1-2b),a+1,b+2,2\times (b-1),1\times \frac{1}{2}(n-a-3-4b)\right)
\]
corresponds to the eigenvalue $\displaystyle {a\choose2}-b$ of $Cay(S_n, T_n)$.
\end{lemma}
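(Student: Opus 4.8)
The plan is to mirror the tableau-based strategy used in Lemmas \ref{odd} and \ref{odd1}: realize the target value as the sum of the arms and legs of the generalized tableau $T_\lambda$, with the first arm and first leg cancelling and the remaining rows contributing a short telescoping sum. First I would confirm that $\lambda=\left(\frac{1}{2}(n-a+1-2b),a+1,b+2,2\times (b-1),1\times \frac{1}{2}(n-a-3-4b)\right)$ is a genuine partition of $n$. The hypothesis $n-a\equiv 1\pmod 2$ makes both halved quantities integers (since $n-a+1$ and $n-a-3$ are even), and a direct addition of the parts gives $n$. For the weakly decreasing condition, the only nontrivial inequality is $\lambda_1\ge\lambda_2$, which rearranges to $n\ge 3a+1+2b$; using $b\le a-2$ this is implied by $n\ge 5a-3$, i.e.\ $a\le\frac{1}{5}(n+3)$, precisely the stated bound on $a$. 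The inequalities $\lambda_2\ge\lambda_3\ge 2$ and the nonnegativity of $\frac{1}{2}(n-a-3-4b)$ likewise follow from $b\le a-2$.

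Next I would locate the main diagonal of $T_\lambda$. Counting parts shows the total number of rows equals $\frac{1}{2}(n-a+1-2b)=\lambda_1$, so the first column has exactly $\lambda_1$ boxes. Hence the first arm (entries $1,2,\dots,\lambda_1-1$) and the first leg (entries $-1,-2,\dots,-(\lambda_1-1)$) have equal size and cancel, contributing nothing to $\rho_\lambda$. Because $\lambda_3=b+2\ge 3$ while $\lambda_4=2<4$, the diagonal box $(i,i)$ exists only for $i=1,2,3$, so only the second and third arms and legs remain.

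Finally I would evaluate these four pieces. The second arm gives $\sum_{j=3}^{a+1}(j-2)={a\choose 2}$; the second leg runs down column $2$ through rows $3,\dots,b+2$ (each of length at least $2$), giving $\sum_{i=3}^{b+2}(2-i)=-{b+1\choose 2}$; the third arm gives $\sum_{j=4}^{b+2}(j-3)={b\choose 2}$; and the third leg is empty, as no row below the third reaches column $3$. Summing,
\[
\rho_\lambda={a\choose 2}-{b+1\choose 2}+{b\choose 2}={a\choose 2}-b,
\]
where the last equality uses ${b+1\choose 2}-{b\choose 2}=b$.

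The essential content here is not the eigenvalue computation, which is a brief telescoping argument, but the bookkeeping that the prescribed part-sizes form a valid partition uniformly over the full ranges $b=2,\dots,a-2$ and $a=4,\dots,\lfloor\frac{1}{5}(n+3)\rfloor$. The point at which the construction is tightest is the extreme case $b=a-2$ together with $a=\lfloor\frac{1}{5}(n+3)\rfloor$, where the inequality $\lambda_1\ge\lambda_2$ becomes nearly an equality; verifying that it still holds (and that $n\ge 17$ keeps the range of $a$ nonempty) is the only place where the numerical hypotheses are genuinely used.
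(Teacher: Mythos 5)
Your proof is correct and follows essentially the same route as the paper: the first arm and first leg of $T_\lambda$ cancel, and the eigenvalue is the sum of the second arm, third arm, and second leg, giving ${a\choose 2}+{b\choose 2}-{b+1\choose 2}={a\choose 2}-b$. The only difference is that you spell out the partition-validity bookkeeping (integrality, the sum being $n$, and $\lambda_1\ge\lambda_2$ reducing to $a\le\frac{1}{5}(n+3)$ via $b\le a-2$), which the paper explicitly omits as routine.
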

\begin{proof}
Let $\lambda=\left(\frac{1}{2}(n-a+1-2b),a+1,b+2,2\times (b-1),1\times \frac{1}{2}(n-a-3-4b)\right)$ with $\frac{1}{2}(n-a-3-4b) \ge 1$.

The first arm and the first leg of $T_\lambda$ have the same size, which is $\frac{1}{2}(n-a+1-2b)$. Thus, the eigenvalue $\rho_\lambda$ is given by the sum of the second and third arms, and the second leg of $T_{\lambda}$, i.e.
\begin{align*}
\rho_\lambda&=\sum_{j=3}^{a+1}(j-2)+\sum_{j=4}^{b+2}(j-3)+\sum_{i=3}^{b+2}(2-i)\\
&={a\choose 2}-b.\qedhere
\end{align*}
\end{proof}

Combining Lemmas~\ref{odd}--\ref{odd2}, we obtain the eigenvalues in the interval $ \left[{a-1\choose 2} + 1, {a\choose 2}\right]$  for  $3 \le a \le \left\lfloor \frac{1}{5}(n + 3) \right\rfloor$.

\begin{corollary}\label{corodd}
Let $n\ge 17$. For each integer $a=3,\dots,\left\lfloor\frac{1}{5}(n+3)\right\rfloor$ with $n-a\equiv 1\pmod 2$, all integers in the interval $\left[{a-1\choose 2}+1,{a\choose 2}\right]$ are eigenvalues of $Cay(S_n,T_n)$.
\end{corollary}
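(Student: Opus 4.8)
The plan is to read each integer in the target interval as $\binom{a}{2}-c$ and, for every admissible $c$, produce a partition realizing that value as an eigenvalue via Lemmas~\ref{odd}--\ref{odd2}. Since $\binom{a-1}{2}+1=\binom{a}{2}-(a-2)$, the interval $\left[\binom{a-1}{2}+1,\binom{a}{2}\right]$ is exactly the set $\left\{\binom{a}{2}-c:\ c=0,1,\dots,a-2\right\}$, so it suffices to exhibit a partition of $n$ whose eigenvalue equals $\binom{a}{2}-c$ for each $c$ in this range.

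I would then match the values of $c$ to the three lemmas: $c=0$ is handled by Lemma~\ref{odd}, $c=1$ by Lemma~\ref{odd1}, and $c=b$ for $b=2,\dots,a-2$ by Lemma~\ref{odd2}. Together these cover every $c\in\{0,1,\dots,a-2\}$ with no gaps, so the union of the realized eigenvalues is the full interval. The boundary case $a=3$ deserves a separate mention: here $a-2=1$, so only $c=0$ and $c=1$ occur and the interval is covered by Lemmas~\ref{odd} and~\ref{odd1} alone, while Lemma~\ref{odd2} (which requires $a\ge 4$) is simply never invoked.

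The remaining work is bookkeeping on the hypotheses. The parity condition $n-a\equiv 1\pmod 2$ carries over verbatim to each lemma, and it is precisely what makes the halved quantities such as $\tfrac{1}{2}(n-a+1)$ integral. For the range of $a$: Lemmas~\ref{odd} and~\ref{odd1} permit $a$ up to $\lfloor(n-1)/3\rfloor$, whereas Lemma~\ref{odd2} restricts to $a\le\lfloor(n+3)/5\rfloor$. Since $(n+3)/5\le(n-1)/3$ holds for all $n\ge 7$, the corollary's range $a\le\lfloor(n+3)/5\rfloor$ lies inside the range of validity of all three lemmas, so each invocation is legitimate. In particular, this same bound is exactly what keeps the internal size constraint of Lemma~\ref{odd2} (the nonnegativity of the final parts) satisfied at the extreme $b=a-2$, where $n\ge 5a-3$ is forced by $5a\le n+3$.

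The main, and essentially only, obstacle is confirming this compatibility of the three ranges together with $n\ge 17$, so that every partition used remains a genuine partition of $n$; there is no new structural idea required, since the eigenvalue computations are already supplied by the lemmas. Assembling these observations yields that each integer $\binom{a}{2}-c$, for $c=0,\dots,a-2$, is an eigenvalue of $Cay(S_n,T_n)$, which is the assertion of the corollary.
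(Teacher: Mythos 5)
Your proposal is correct and matches the paper's own (implicit) argument: the corollary is stated there as an immediate consequence of combining Lemmas~\ref{odd}--\ref{odd2}, with $c=0,1$ handled by Lemmas~\ref{odd} and~\ref{odd1} and $c=2,\dots,a-2$ by Lemma~\ref{odd2}, exactly as you do. Your range-compatibility checks (that $\lfloor(n+3)/5\rfloor\le\lfloor(n-1)/3\rfloor$ for $n\ge 7$, the separate treatment of $a=3$, and the tightness of $n\ge 5a-3$ at $b=a-2$) are accurate and simply make explicit the bookkeeping the paper leaves unstated.
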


Next, we turn to the remaining  cases of $a$,  where $\left\lceil\frac{1}{5}(n+3) \right\rceil\le a\le \left\lfloor\frac{1}{3}(n-15) \right\rfloor$. These bounds on $a$ imply that $n\ge 45$. The next two lemmas give the eigenvalues ${a \choose 2} - c$ over the following ranges for $c$:
\[ 2 \le c \le b+1, ~~b+2 \le c  \le a-2, \]
where
\begin{align}
b & = \frac{1}{2}(n-3a-1). \tag{*}\label{b}
\end{align}

\begin{lemma}\label{odd3}
Let $n \ge 45$.  For each integer $a = \left\lceil\frac{1}{5}(n+3)\right\rceil, \ldots,  \left\lfloor\frac{1}{3}(n-15) \right\rfloor$ with $n-a \equiv 1 \pmod{2}$, and $b$ as given in (\ref{b}), the integers ${a \choose 2} - c$, where $2 \le c \le b+1$, are eigenvalues of $Cay(S_n, T_n)$, and their associated partitions are given in Table \ref{tab:table1}.
\begin{table}[h!]
\centering
\begin{tabular}{@{}|p{0.4cm}p{6cm}|p{9cm}|@{}}
\hline
& $\rho_\lambda$ &  $\lambda$   \\
\hline 
& & \\[-0.1cm]
 & $ {a\choose 2}-2$ & $\left(a,a-1,b,7,6,4,2\times (b-7),1\times (a-b-1)\right)$ \\[0.2cm]
  & ${a\choose 2}-3$ & $\left(a\times 2,b,6\times 2,3,2\times (b-7), 1\times (a-b) \right)$ \\[0.2cm]
  & ${a\choose 2}-c, ~~~c=4, ..., \lceil b/2\rceil$ & $(a\times 2,b-c+5,2+c,4,3\times (c-3),2\times (b+2-2c), 1\times (a-b+c-5)) $\\[0.2cm]
  & ${a\choose 2}-c, ~~~c=\lceil b/2\rceil+1, ..., b-2$ & $(a\times 2,c+2,b-c+4,6,3\times (b-c-2),2\times (2c-b-1), 1\times (a-c-3))$ \\[0.2cm]
  & ${a\choose 2}-(b-1)$ & $\left(a\times 2,b,7,4,3,2\times (b-6),1\times (a-b-1)\right)$ \\[0.2cm]
  & ${a\choose 2}-b$ & $\left(a\times 2,b+1,6,4,2\times (b-4),1\times (a-b-2)\right)$ \\[0.2cm]
  & ${a\choose 2}-(b+1)$ &$\left(a,a-1,b,7,5,4,2\times (b-6),1\times (a-b-2)\right)$ \\[0.2cm]
\hline
\end{tabular}
\caption{Eigenvalues  ${a \choose 2} - c$, where $c = 2, \ldots, b+1$.}
\label{tab:table1}
\end{table}
\end{lemma}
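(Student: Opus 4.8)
The plan is to prove the lemma by direct verification, treating it as a bookkeeping exercise rather than a structural argument. For each of the seven rows of Table~\ref{tab:table1} — five giving a single partition for a fixed value of $c$ (namely $c=2,3,b-1,b,b+1$) and two giving one-parameter families in $c$ — I would carry out two checks: first, that the displayed data is a genuine partition of $n$ under the standing hypotheses $n\ge 45$, $\left\lceil\frac{1}{5}(n+3)\right\rceil\le a\le\left\lfloor\frac{1}{3}(n-15)\right\rfloor$, $n-a\equiv 1\pmod 2$, and $b=\frac{1}{2}(n-3a-1)$; and second, that its eigenvalue, computed via Theorem~\ref{eigencal}, equals ${a\choose 2}-c$.

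For the eigenvalue computation the cleanest route is the closed form $\rho_\lambda=\sum_i\frac{\lambda_i(\lambda_i-2i+1)}{2}$ arising from the row sums of $T_\lambda$ (equivalently, summing arms and legs). In every partition the first two rows have lengths $a,a-1$ or $a,a$, and these supply the leading ${a\choose 2}$ term, while the long tails of repeated $2$'s and $1$'s contribute arithmetic progressions $\sum_i(3-2i)$ and $\sum_i(1-i)$ over explicit index ranges that collapse by $\sum_{i=p}^{q}i=\frac{(p+q)(q-p+1)}{2}$. After this collapse the $a$- and $b$-quadratic terms cancel, leaving exactly the constant $-c$; for the first row, for instance, the bulk rows sum to $b^2-4b$ and the remaining small rows to $-b^2+4b-2$, giving ${a\choose 2}-2$. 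A useful global consistency check, which I would record first, is that each partition satisfies $3a+2b+1=n$, so the parts automatically sum to $n$; this holds even $c$-independently for the two families, which is strong evidence the table is correct.

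The main obstacle is the two families, $4\le c\le\lceil b/2\rceil$ and $\lceil b/2\rceil+1\le c\le b-2$. Here $\rho_\lambda={a\choose 2}-c$ must be verified \emph{uniformly} in $c$: the parameter enters both the part sizes (e.g. $b-c+5$, $2+c$, $3\times(c-3)$, $2\times(b+2-2c)$) and the index ranges of the telescoping sums, so the verification is a polynomial identity in $c$ and $b$, not a single numerical check. Equally delicate is validity of the part sizes across the full range: multiplicities such as $b+2-2c$ and $2c-b-1$ are nonnegative only on complementary halves of $[4,b-2]$, which is precisely why the family is split at $c=\lceil b/2\rceil$ and why the endpoints $c=2,3,b-1,b,b+1$ need the separate bespoke partitions. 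I would therefore first extract from the hypotheses the inequalities that make everything valid — in particular $b\ge 7$ (tight at the largest admissible $a$) and $a\ge b+2$, since $a-b=\frac{1}{2}(5a-n+1)$ is minimized at the smallest $a$ — and then check the weakly-decreasing condition regime by regime. The genuinely delicate points are the tight corners where multiplicities such as $a-b-2$ or $a-b+c-5$ hit $0$ at the extremes of the $a$-range and where $\lceil b/2\rceil$ meets the family boundaries; once these inequalities are in hand, the eigenvalue identities themselves are routine.
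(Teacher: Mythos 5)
Your proposal is correct and is essentially the paper's own argument: the paper likewise proves the lemma by direct, case-by-case verification that each tabulated partition is a valid partition of $n$ whose eigenvalue is ${a\choose 2}-c$, only it presents the bookkeeping as arm/leg cancellation in the diagram of $T_\lambda$ (drawing figures for just the two parametric families and omitting the remaining rows as ``similar''), whereas you use the equivalent closed-form row-sum identity $\rho_\lambda=\sum_i\lambda_i(\lambda_i-2i+1)/2$ from Section~2. Your consistency check $3a+2b+1=n$, your sample computation for the first row, and your validity inequalities ($b\ge 7$ at the top of the $a$-range, $a\ge b+2$ at the bottom) all check out, so the plan carries through.
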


\begin{proof}
The partition $\lambda$ is constructed so that specific arms and legs of $T_{\lambda}$ cancel each other out as much as possible, leaving a collection of boxes that add up to the desired eigenvalue. The proof is best visualized using the diagram of $T_\lambda$. For instance, the eigenvalues in the third and fouth row of Table \ref{tab:table1} are shown in Figure \ref{eigenvalue_01} and Figure \ref{eigenvalue_02} respectively. Notice that the sum of the grey boxes is to the eigenvalue. Some of these rows may not be strictly decreasing. The diagrams for all the other cases are similar and hence omitted.

\begin{figure}[h!]

\drawYoungTableauHighlight
  {{0,1,2,3, \dots, \dots, \dots, $a{-}1$},
  {$-$1,0, 1,2,\dots, \dots, \dots, $a{-}2$},
  {$-$2,$-$1, 0, 1, \dots, \dots,  $b{-}c{+}2$},
  {$-$3, $-$2,$-$1,0,\dots, $c{-}2$},
  {$-$4, $-$3, $-$2, $-$1},
  {$-$5, $-$4, $-$3},
    {\dots, \dots, \dots},
  {${-}(c{+}1)$, ${-}c$, ${-}(c{-}1)$},
  {${-}(c{+}2)$, ${-}(c{+}1)$},
  {\dots, \dots},
    {${-}(b{-}c{+}3)$, ${-}(b{-}c{+}2)$},
      {${-}(b{-}c{+}4)$},
      {\dots},
      {${-}(a{-}2)$}
  }
  {2cm}{0.5cm}
  {{1}{8}, {1}{7}, {1}{6}, {1}{5}, {1}{4}, {1}{3}, {1}{2}, {8}{3}, {5}{4}}
  
\caption{Eigenvalue ${a\choose 2}-c$, where  $c=4, ..., \lceil b/2\rceil$.}
\label{eigenvalue_01}
\end{figure}

\begin{figure}[h!]

\drawYoungTableauHighlight
  {{0,1,2,3, 4, 5,\dots, \dots, $a{-}1$},
  {$-$1,0, 1,2,3, 4, \dots, \dots, $a{-}2$},
  {$-$2,$-$1, 0, 1,2, 3,  \dots, $c{-}1$},
  {$-$3, $-$2,$-$1,0,1, 2,\dots, $b{-}c$},
  {$-$4, $-$3, $-$2, $-$1, 0, 1},
  {$-$5, $-$4, $-$3},
  {\dots, \dots, \dots},
  {${-}(b{-}c{+}2)$,${-}(b{-}c{+}1)$, ${-}(b{-}c)$},
    {\dots, \dots},
    {${-}(c{+}1)$, ${-}c$},
      {${-}(c{+}2)$},
      {\dots},
      {${-}(a{-}2)$}
  }
  {2cm}{0.5cm}
  {{1}{8}, {1}{9}, {1}{7}, {1}{6}, {1}{5}, {1}{4}, {1}{3}, {1}{2}, {10}{2}}

\caption{Eigenvalue ${a\choose 2}-c$, where $c=\lceil b/2\rceil+1, ..., b-2$.}
\label{eigenvalue_02}
\end{figure}

\end{proof}

\begin{lemma}\label{odd4}
Let $n \ge 45$.  For each integer $a = \left\lceil\frac{1}{5}(n+7) \right\rceil, \dots, \left\lfloor\frac{1}{3}(n-11) \right\rfloor$ with $n-a \equiv 1 \pmod{2}$, and $b$ as given in (\ref{b}), the integers ${a \choose 2} - c$, where $b+2 \le c \le a-2$, are eigenvalues of $Cay(S_n, T_n)$, and their associated partitions are given in Table \ref{tab:table2}.
\begin{table}[h!]
\centering
\begin{tabular}{@{}|p{0.4cm}p{6cm}|p{9cm}|@{}}
\hline
& $\rho_\lambda$ &  $\lambda$   \\
\hline 
& & \\[-0.1cm]
 & ${a\choose 2}-c, ~~~c = b+2, ... \lfloor(a+b)/2\rfloor$ & $(a,a+b-c+2,c+1,4,2\times (c-3),1\times (a+b-2c)$ \\[0.2cm]
 & & \\[-0.1cm]
  & ${a\choose 2}-c, ~~~c = \lfloor(a+b)/2\rfloor, ..., a-2$ & $(a,c+1,a+b-c-1,6\times 2,2\times (a+b-c-7),1\times (2c-a-b+3)$ \\[0.2cm]
\hline
\end{tabular}
\caption{Eigenvalues  ${a \choose 2} - c$, where $c = b+2, ...,  a-2$.}
\label{tab:table2}
\end{table}
\end{lemma}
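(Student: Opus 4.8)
The plan is to follow the template already established in Lemmas \ref{odd}--\ref{odd3}: for each of the two rows of Table \ref{tab:table2} I would take the displayed partition $\lambda$, verify that it is a genuine partition of $n$, and then show by a direct content computation that $\rho_\lambda = {a\choose 2} - c$. The two rows correspond to the sub-ranges $b+2 \le c \le \lfloor(a+b)/2\rfloor$ and $\lfloor(a+b)/2\rfloor \le c \le a-2$; the overlap at $c = \lfloor(a+b)/2\rfloor$ is harmless, since both partitions yield the same eigenvalue there, and it guarantees that together they cover the entire range $b+2 \le c \le a-2$.

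First I would check that each $\lambda$ sums to $n$. Writing $p := a+b-c$ and substituting $b = \tfrac{1}{2}(n-3a-1)$ from (\ref{b}), the part multiplicities in both rows collapse to $3a + 2b + 1 = n$. Next I would verify that the listed parts are weakly decreasing and that every multiplicity is nonnegative; this is precisely where the stated range of $c$ and the bound $n \ge 45$ (which forces $b \ge 5$) are used. For the first partition the relevant constraints are $a \ge a+b-c+2$, equivalently $c \ge b+2$, together with the nonnegativity $a+b-2c \ge 0$ of the trailing block of $1$'s, equivalently $c \le \lfloor(a+b)/2\rfloor$; for the second partition the analogous ordering constraint $c+1 \ge a+b-c-1$ and the nonnegativity of the blocks of $2$'s and $1$'s confine $c$ to the upper sub-range. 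These are exactly the inequalities that force the split at $c = \lfloor(a+b)/2\rfloor$ recorded in Table \ref{tab:table2}.

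The eigenvalue itself I would compute with $\rho_\lambda = \sum_i \tfrac{1}{2}\lambda_i(\lambda_i - 2i + 1)$, equivalently by the arm-and-leg cancellation visualized in Figures \ref{eigenvalue_01}--\ref{eigenvalue_02}. Grouping the contributions as the first row, the two long rows (rows $2$ and $3$), the fixed short rows, the block of $2$'s, and the block of $1$'s, each block sum is an arithmetic series that evaluates to a quadratic in $p$ and $c$. Upon adding them the entire $p$-dependence and the $c^2$-terms cancel, leaving $2\rho_\lambda = a^2 - a - 2c$, i.e. $\rho_\lambda = {a\choose 2} - c$, for both partitions. I would display one of these as a labelled Young-tableau figure in the style of Figure \ref{eigenvalue_01} to make the cancellation transparent, and note that the remaining case is identical in spirit.

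The genuinely delicate step is not the arithmetic but confirming that both partitions remain valid uniformly across the full ranges of $a$ and $c$: one must simultaneously control the ordering of the first few irregular parts and the nonnegativity of all four multiplicity counts, and check that the two sub-ranges abut at $\lfloor(a+b)/2\rfloor$ without leaving a gap. I expect this edge-case bookkeeping, rather than the closed-form evaluation of $\rho_\lambda$, to be the main obstacle, and it is exactly what the lower bound $n \ge 45$ and the stated interval $\lceil\tfrac{1}{5}(n+7)\rceil \le a \le \lfloor\tfrac{1}{3}(n-11)\rfloor$ are calibrated to guarantee.
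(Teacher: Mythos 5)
Your proposal is correct and takes essentially the same route as the paper: the paper's own proof simply points to its two tableau figures and verifies, via arm-and-leg cancellation, that each tabled partition is a partition of $n$ whose surviving (grey) boxes sum to ${a\choose 2}-c$, which is exactly your computation with $\rho_\lambda=\sum_i\tfrac{1}{2}\lambda_i(\lambda_i-2i+1)$. The difference is only presentational---the paper suppresses the routine algebra and validity bookkeeping that you spell out, and your key assertions (the parts summing to $3a+2b+1=n$, the ordering/nonnegativity constraints forcing the split at $\lfloor(a+b)/2\rfloor$, and $b\ge 5$ coming from the stated range of $a$) are all accurate.
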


\begin{proof}  As shown in Figure \ref{fig:eigenvalue_03} and Figure  \ref{fig:eigenvalue_04}, the sum of the grey boxes in the corresponding $T_{\lambda}$ is equal to the desired eigenvalue.

\begin{figure}[h!]
\drawYoungTableauHighlight
  {{0,1,2,3,\dots, \dots, $a{-}1$},
  {$-$1,0, 1,2, \dots, \dots, $a{+}b-c$},
  {$-$2,$-$1, 0, 1, \dots, $c{-}2$},
  {$-$3, $-$2, $-$1, 0},
  {$-$4, $-$3},
    {\dots, \dots},
    {${-}(c)$, ${-}(c{-}1)$},
      {${-}(c{+}1)$},
      {\dots},
      {${-}(a{+}b{-}c)$}
  }
  {2cm}{0.5cm}
  {{1}{8}, {1}{9}, {1}{7}, {1}{6}, {1}{5}, {1}{4}, {1}{3}, {1}{2}, {7}{2}, {4}{3}}
\caption{Eigenvalue ${a\choose 2}-c$, where  $c=b+2, ..., \lfloor (a+b)/2 \rfloor$.}
\label{fig:eigenvalue_03}
\end{figure}

\begin{figure}[h!]
\drawYoungTableauHighlight
  {{0,1,2,3,4, 5,\dots, \dots, $a{-}1$},
  {$-$1,0, 1,2, 3, 4, \dots, $c{-}1$},
  {$-$2,$-$1, 0, 1,2,3, \dots, $a{+}b{-}c{-}4$},
  {$-$3, $-$2, $-$1, 0, 1, 2},
  {$-$4, $-$3, $-$2, $-$1, 0, 1},
  {$-$5, $-$4},
    {\dots, \dots},
    {{\footnotesize${-}(a{+}b{-}c{-}3)$},{\footnotesize${-}(a{+}b{-}c{-}4)$}},
      {{\footnotesize${-}(a{+}b{-}c{-}2)$}},
      {\dots},
      {${-}c$}
  }
  {2cm}{0.5cm}
  {{1}{8}, {1}{9}, {1}{7}, {1}{6}, {1}{5}, {1}{4}, {1}{3}, {1}{2}, {11}{1}}
\caption{Eigenvalue ${a\choose 2}-c$, where $c= \lceil (a+b)/2 \rceil, \ldots, a-2$.}
\label{fig:eigenvalue_04}
\end{figure}
\end{proof}

We are now ready to prove Theorem \ref{x1}.

\begin{proof}[Proof of Theorem \ref{x1}]
Let $n \ge 24$, and $a \in \{3, \ldots, \left\lfloor \frac{1}{3}(n-15)\right\rfloor\}$ with $n-a \equiv 1 \pmod{2}$. By Lemma \ref{odd} and Lemma \ref{odd1}, we only need to consider the case ${a \choose 2}-c$, where $c = 2, \ldots, a-2$.

If $n\le 44$, then the result follows by Corollary \ref{corodd} since $\lfloor \frac{1}{5}(n+3)\rfloor\ge \lfloor\frac{1}{3}(n-15)\rfloor$. Suppose that $n\ge 45$.  By Corollary \ref{corodd}, we may assume that $\lceil \frac{1}{5}(n+3)\rceil\le a \le \lfloor\frac{1}{3}(n-15)\rfloor$. Let $b=\frac{1}{2}(n-3a-1)$ be as given in (\ref{b}). As $a\ge \frac{1}{5}(n+3)$, we have $a\ge b+2$. If $b+1\ge a-2$, then the result follows by Lemma \ref{odd3}. If $b+2\le a-2$, then $a\ge \lceil\frac{1}{5}(n+7)\rceil$ and so the result follows by Lemma \ref{odd3} and Lemma \ref{odd4}.
\end{proof}

\subsection{The even case}

In this subsection, assume that $n-a \equiv 0 \pmod 2$. As before, we will construct a partition $\lambda = (\mu_1 \times t_1, \ldots, \mu_r \times t_r)  \vdash n$ that yields the desired eigenvalue.

\begin{lemma}\label{even}
Let $n\ge 13$. For each integer $a=3,\dots,\left\lfloor\frac{1}{3}(n-4)\right\rfloor$ with $n - a \equiv 0 \pmod 2$, the partition \[
\left(\frac{1}{2}(n-a-2),a+1,4,1\times \frac{1}{2}(n-a-8)\right)
\]
corresponds to the eigenvalue ${a\choose2}$ of $Cay(S_n, T_n)$.
\end{lemma}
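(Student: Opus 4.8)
The plan is to apply Theorem~\ref{eigencal} and read off $\rho_\lambda$ as the sum over all arms and legs of $T_\lambda$, exactly as in the proofs of Lemmas~\ref{odd}--\ref{odd2}. Write $\lambda=\left(\frac{1}{2}(n-a-2),\,a+1,\,4,\,1\times\frac{1}{2}(n-a-8)\right)$. First I would check that this is a genuine partition of $n$: the parity assumption $n-a\equiv 0\pmod 2$ makes $\frac{1}{2}(n-a-2)$ and $\frac{1}{2}(n-a-8)$ integers; the bound $a\le\lfloor\frac{1}{3}(n-4)\rfloor$ gives $\frac{1}{2}(n-a-2)\ge a+1$, so the parts are weakly decreasing; the constraint $a\ge 3$ gives $a+1\ge 4$; and $n\ge 13$ keeps all part sizes nonnegative. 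A short count confirms the parts sum to $n$.

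The key structural observation is that $\lambda$ has exactly $3+\frac{1}{2}(n-a-8)=\frac{1}{2}(n-a-2)$ rows, which equals $\lambda_1$. Hence the first arm and the first leg of $T_\lambda$ have the same length $\lambda_1-1$, and since their fillings are $1,2,\dots,\lambda_1-1$ and $-1,-2,\dots,-(\lambda_1-1)$ respectively, they cancel and contribute nothing to $\rho_\lambda$. It then remains only to collect the contributions of the second and third arms and the second and third legs.

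The remaining computation is routine: the second arm, consisting of the boxes $(2,3),\dots,(2,a+1)$, has fillings $1,\dots,a-1$ and sums to ${a\choose 2}$; the third arm is the single box $(3,4)$ with filling $1$; the second leg consists only of $(3,2)$, since every row from the fourth on has length $1<2$, contributing filling $-1$; and the third leg is empty because no row below the third has length $\ge 3$. Summing these,
\[
\rho_\lambda=\sum_{j=3}^{a+1}(j-2)+\sum_{j=4}^{4}(j-3)+\sum_{i=3}^{3}(2-i)={a\choose 2}+1-1={a\choose 2}.
\]

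There is no genuine difficulty here; the only point requiring care—and the step I would single out as the main obstacle—is the bookkeeping needed to confirm that the first arm and first leg really do cancel, i.e.\ that the number of rows of $\lambda$ is exactly $\lambda_1$, together with verifying that the second and third legs terminate where claimed (both forced by the fact that all rows past the third have length $1$). Once these incidence facts are pinned down, the eigenvalue ${a\choose 2}$ follows immediately.
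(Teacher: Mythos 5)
Your proof is correct and follows essentially the same route as the paper: you observe that the number of rows equals $\lambda_1=\frac{1}{2}(n-a-2)$, so the first arm and first leg of $T_\lambda$ cancel, and the eigenvalue reduces to the second arm, second leg, and third arm, giving ${a\choose 2}+1-1={a\choose 2}$. The only difference is that you spell out the validity checks (parity, monotonicity of parts, sum equal to $n$) that the paper dismisses as routine.
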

\begin{proof}
Let $\lambda=\left(\frac{1}{2}(n-a-2),a+1,4,1\times \frac{1}{2}(n-a-8)\right)$ with $\frac{1}{2}(n-a-8) \ge 1$. There are exactly $\frac{1}{2}(n-a-2)$ boxes in the first arm and the first leg of $T_{\lambda}$, the sum of which is $0$ and contributes nothing to the eigenvalue. Thus, the eigenvalue can be calculated using only the second arm, the second leg and the third arm of $T_{\lambda}$:
\[
\rho_\lambda=\sum_{i=3}^{a+1}(i-2)+(-1) + 1 ={a\choose 2}.
\]
\end{proof}

\begin{lemma}\label{even1}
Let $n\ge 10$. For each integer $a=2,\dots,\left\lfloor\frac{1}{3}(n-2)\right\rfloor$ with $n - a\equiv 0\pmod 2$, the partition \[
\left(\frac{1}{2}(n-a),a+1,2,1\times \frac{1}{2}(n-a-6)\right)
\]
corresponds to the eigenvalue $\displaystyle {a\choose2}-1$ of $Cay(S_n, T_n)$.
\end{lemma}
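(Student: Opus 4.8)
The plan is to mirror the proof of Lemma~\ref{even}: write down the partition explicitly, confirm it is a genuine partition of $n$ under the stated hypotheses, and then read off $\rho_\lambda$ from the arm/leg decomposition of $T_\lambda$ supplied by Theorem~\ref{eigencal}. Set $\lambda=\left(\frac{1}{2}(n-a),a+1,2,1\times\frac{1}{2}(n-a-6)\right)$. The congruence $n-a\equiv 0\pmod 2$ makes all the indicated lengths and multiplicities integers; the bound $a\le\frac{1}{3}(n-2)$ yields $n-a\ge 2a+2$, hence $\frac{1}{2}(n-a)\ge a+1$, so the first two parts are correctly ordered; $a\ge 2$ gives $a+1\ge 3>2$; and $n\ge 10$ keeps the trailing run of $1$'s of nonnegative length. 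A one-line count checks that the parts sum to $n$.

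The crucial observation I would record first is that the total number of rows of $\lambda$ is $R=3+\frac{1}{2}(n-a-6)=\frac{1}{2}(n-a)$, which is exactly $\lambda_1$. Consequently the first arm of $T_\lambda$, consisting of the boxes $(1,2),\dots,(1,\lambda_1)$ with fillings $1,2,\dots,\lambda_1-1$, and the first leg, consisting of $(2,1),\dots,(R,1)$ with fillings $-1,-2,\dots,-(R-1)$, contain the same number of boxes and carry opposite values. They therefore cancel and contribute nothing to $\rho_\lambda$; this matching of sizes is precisely what fixes the trailing block at length $\frac{1}{2}(n-a-6)$.

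It then remains to sum the boxes lying strictly below the first row and strictly right of the first column. The diagonal boxes $(1,1)$ and $(2,2)$ are filled with $0$. The second arm is made up of $(2,3),\dots,(2,a+1)$ with fillings $1,\dots,a-1$, contributing $\sum_{j=3}^{a+1}(j-2)=\binom{a}{2}$, while the only remaining off-diagonal box is $(3,2)$, which sits in the second leg with filling $2-3=-1$. (It is exactly the choice $\lambda_3=2$ that produces this single box in the second leg while ruling out a third diagonal box, and hence a third arm.) Summing,
\[
\rho_\lambda=\binom{a}{2}+(-1)=\binom{a}{2}-1,
\]
as required. I anticipate no real obstacle here: the statement is pure book-keeping once the partition is written down, and the only point demanding care is the verification $R=\lambda_1$, which underlies the cancellation of the first arm and leg and determines why the construction takes this particular form.
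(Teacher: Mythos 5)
Your proof is correct and takes essentially the same route as the paper: the first arm of $T_\lambda$ cancels against the first leg (which is exactly why the trailing run of $1$'s has length $\frac{1}{2}(n-a-6)$, making the number of rows equal to $\lambda_1$), and the eigenvalue is then the sum of the second arm, $\binom{a}{2}$, plus the single second-leg box $(3,2)$ contributing $-1$. The only difference is that you spell out the routine bookkeeping (partition validity, the row count $R=\lambda_1$, absence of a third arm) that the paper states without detail.
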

\begin{proof}
Let $\lambda=\left(\frac{1}{2}(n-a),a+1,2,1\times \frac{1}{2}(n-a-6)\right)$ with $\frac{1}{2}(n-a-6) \ge 1$. Except for the second arm and the seocnd leg of $T_{\lambda}$, all the other boxes cancel out yielding the eigenvalue
\[\rho_\lambda=\sum_{i=3}^{a+1}(i-2)-1={a\choose 2}-1. \]
\end{proof}

\begin{lemma}\label{even2}
Let $n\ge 27$. For each integer $a=5,\dots,\left\lfloor\frac{1}{3}(n-12)\right\rfloor$ with $n-a\equiv 0\pmod 2$, the partition \[
\left(\frac{1}{2}(n-a-10),a+1,6,5,3,1\times \frac{1}{2}(n-a-20)\right)
\]
corresponds to the eigenvalue $\displaystyle {a\choose2}-2$ of $Cay(S_n, T_n)$.
\end{lemma}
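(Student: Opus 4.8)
The plan is to apply Theorem~\ref{eigencal} and compute $\rho_\lambda$ as the sum of all the boxes of the generalized tableau $T_\lambda$, grouped into arms and legs about the main diagonal, exactly as in the proofs of Lemmas~\ref{even}--\ref{even1}. Writing $\lambda=\left(\tfrac{1}{2}(n-a-10),\,a+1,\,6,\,5,\,3,\,1\times\tfrac{1}{2}(n-a-20)\right)$, I would first record the (routine) validity conditions that the paper defers: the bounds $a\ge 5$ and $a\le\tfrac{1}{3}(n-12)$ together with $n-a$ even force $\tfrac{1}{2}(n-a-10)\ge a+1\ge 6$ and make $\tfrac{1}{2}(n-a-20)$ a nonnegative integer, and a one-line sum confirms $\sum_i\lambda_i=n$.

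The crucial structural observation is that the height of the first column of $T_\lambda$ equals the total number of rows, namely $5+\tfrac{1}{2}(n-a-20)=\tfrac{1}{2}(n-a-10)=\lambda_1$. Hence the first arm and the first leg both have length $\lambda_1-1$, with entries $1,2,\dots,\lambda_1-1$ and $-1,-2,\dots,-(\lambda_1-1)$ respectively, so they cancel and contribute $0$. This is precisely the $n$-dependent bulk of the tableau, and removing it reduces $\rho_\lambda$ to a finite sum coming from the remaining diagonal boxes.

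Next I would pin down the Durfee square. Since $\lambda_4=5\ge 4$ but $\lambda_5=3<5$, the diagonal terminates at the box $(4,4)$, so the surviving contributions are the second, third and fourth arms together with the second and third legs only. Columns $2$ and $3$ have height $5$ and column $4$ has height $4$, so the fourth leg is empty; the second arm contributes $\sum_{j=3}^{a+1}(j-2)=\binom{a}{2}$, the second leg $(-1)+(-2)+(-3)=-6$, the third arm $1+2+3=6$, the third leg $(-1)+(-2)=-3$, and the fourth arm $1$. Summing gives $\rho_\lambda=\binom{a}{2}-6+6-3+1=\binom{a}{2}-2$, as claimed.

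The only real subtlety --- and hence the step I would be most careful with --- is the geometry where the fixed block $(6,5,3)$ straddles the diagonal: because the $(5,5)$ box does not exist, one must resist adding a spurious fifth leg, and one must read the relevant column heights correctly to get the leg lengths. Everything after fixing the Durfee index $4$ and using the first-arm/leg cancellation is bounded arithmetic independent of $n$. As a cross-check I would also evaluate the closed formula $\rho_\lambda=\sum_i\tfrac{\lambda_i(\lambda_i-2i+1)}{2}$ directly: the quadratic-in-$n$ terms from $\lambda_1$ and from the trailing $1$'s cancel, leaving $\tfrac{(a+1)(a-2)}{2}-1=\binom{a}{2}-2$, in agreement.
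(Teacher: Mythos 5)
Your proof is correct and follows essentially the same route as the paper: exploit the fact that the number of rows equals $\lambda_1$ so the first arm and first leg cancel, then sum the remaining arms and legs inside the Durfee square to get $\binom{a}{2}-6+6-3+1=\binom{a}{2}-2$, exactly matching the paper's (terser) cancellation argument. Your added checks --- the validity of the part sizes, the column heights near the block $(6,5,3)$, and the closed-formula cross-check --- are details the paper omits as routine, but they do not change the method.
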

\begin{proof}
Let $\lambda=\left(\frac{1}{2}(n-a-10),a+1,6,5,3,1\times \frac{1}{2}(n-a-20)\right)$ with $\frac{1}{2}(n-a-20) \ge 1$. Except for the second arm and the last entry in the the third leg of $T_{\lambda}$, all the other boxes cancel out yielding the eigenvalue
\[
\rho_\lambda=\sum_{i=3}^{a+1}(i-2)+(-2)={a\choose 2}-2.
\]
\end{proof}

\begin{lemma}\label{even3}
Let $n\ge 19$. For each integer $a=5,\dots,\left\lfloor\frac{1}{3}(n-4)\right\rfloor$ with $n-a \equiv 0\pmod 2$, the partition \[
\left(\frac{1}{2}(n-a-2),a+1,3,2,1\times \frac{1}{2}(n-a-10)\right)
\]
corresponds to the eigenvalue $\displaystyle {a\choose2}-3$ of $Cay(S_n, T_n)$.
\end{lemma}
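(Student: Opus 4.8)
The plan is to apply the arm--leg decomposition of Theorem~\ref{eigencal} to the tableau $T_\lambda$ for $\lambda = \left(\frac{1}{2}(n-a-2),\, a+1,\, 3,\, 2,\, 1\times \frac{1}{2}(n-a-10)\right)$, exactly as in the proofs of Lemmas~\ref{even}--\ref{even2}. Before computing, I would first record that $\lambda$ is a genuine partition of $n$ in the stated range: summing the parts gives $\frac{1}{2}(n-a-2)+(a+1)+3+2+\frac{1}{2}(n-a-10)=n$, and the parts are weakly decreasing provided $\frac{1}{2}(n-a-2)\ge a+1$, i.e. $a\le \frac{1}{3}(n-4)$, which is precisely the stated upper bound on $a$. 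The number of trailing $1$'s, namely $\frac{1}{2}(n-a-10)$, is a nonnegative integer because $n-a$ is even and $n\ge 19$ together with $a\le \frac{1}{3}(n-4)$ force $n-a-10\ge 0$. These are the routine bound checks that the paper suppresses elsewhere.

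The key structural observation is that $\lambda$ has exactly $\frac{1}{2}(n-a-2)$ rows, which equals $\lambda_1$. Hence the first arm (with entries $1,2,\ldots,\frac{1}{2}(n-a-4)$) and the first leg (with entries $-1,-2,\ldots,-\frac{1}{2}(n-a-4)$) have equal size and cancel, contributing $0$ to $\rho_\lambda$. It then remains to sum the surviving off-diagonal boxes. The second arm runs across row $2$ in columns $3,\ldots,a+1$, contributing $\sum_{j=3}^{a+1}(j-2)={a\choose 2}$. The only other nonempty structure lies in the second column: since row $3$ has length $3$ and row $4$ has length $2$, the second leg consists of the boxes $(3,2)$ and $(4,2)$ with entries $-1$ and $-2$, contributing $-3$. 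All remaining arms and legs are empty---the third arm vanishes because $\lambda_3=3$, and the third leg vanishes because $\lambda_4=2<3$---so that
\[
\rho_\lambda=\sum_{j=3}^{a+1}(j-2)+(-1)+(-2)={a\choose 2}-3,
\]
as claimed.

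There is no serious obstacle here beyond the bookkeeping: the construction differs from Lemma~\ref{even} only in lengthening the second leg by one box. Where Lemma~\ref{even} used a third part $4$ (producing a compensating $+1$ in the third arm together with a single $-1$ in the second leg), here the parts $3,2$ produce a second leg totalling $-3$ and no third arm, shifting the eigenvalue down by exactly $3$. The one point warranting care---and the only place the argument could go wrong---is confirming that no further leg is created in columns $2$ or $3$ by the trailing rows; this is guaranteed precisely because those rows have length $1$, so the tail of the diagram contributes nothing off the main diagonal. As the authors do for the harder cases, drawing the diagram of $T_\lambda$ makes the cancellation transparent and would be the cleanest way to present the argument.
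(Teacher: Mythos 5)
Your proof is correct and takes essentially the same route as the paper's: the first arm and first leg of $T_\lambda$ cancel (since the diagram has exactly $\lambda_1=\tfrac{1}{2}(n-a-2)$ rows), leaving the second arm, which sums to $\binom{a}{2}$, and the second leg $\{-1,-2\}$, which sums to $-3$. The only difference is that you make explicit the routine validity checks (parts summing to $n$, weak monotonicity forced by $a\le\tfrac{1}{3}(n-4)$, nonnegativity of the number of trailing $1$'s), which the paper deliberately omits.
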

\begin{proof}
Let $\lambda=\left(\frac{1}{2}(n-a-2),a+1,3,2,1\times \frac{1}{2}(n-a-10)\right)$. The eigenvalue is just the sum of the second arm and the second leg of $T_{\lambda}$ since all the other boxes cancel out:
\[
\rho_\lambda=\sum_{i=2}^{a+1}(i-2)-1-2={a\choose 2}-3. \]
\end{proof}

\begin{lemma}\label{even4}
Let $n\ge 26$. For each integer $a=6,\dots,\lfloor\frac{1}{5}(n+4)\rfloor$ with $n-a\equiv 0 \pmod 2$ and $b=4,\dots,a-2$, the partition \[
\left(\frac{1}{2}(n-a-2b+2),a+1,b+1,3,2\times (b-3),1\times \frac{1}{2}(n-a-4b)\right)
\]
corresponds to the eigenvalue $\displaystyle {a\choose2}-b$ of $Cay(S_n, T_n)$.
\end{lemma}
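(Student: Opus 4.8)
The plan is to follow the template of Lemmas \ref{even}--\ref{even3}: set $\lambda$ equal to the displayed partition, invoke Theorem \ref{eigencal} so that $\rho_\lambda$ equals the sum of all the boxes of $T_\lambda$, and then evaluate this sum by grouping the boxes into arms and legs and cancelling as much as possible. The first thing I would compute is the number of rows of $\lambda$. Adding up the four explicit parts $\tfrac12(n-a-2b+2),\,a+1,\,b+1,\,3$, the $b-3$ copies of $2$, and the $\tfrac12(n-a-4b)$ copies of $1$, the row count comes to $b+1+\tfrac12(n-a-4b)=\tfrac12(n-a-2b+2)=\lambda_1$. Hence the first arm and the first leg of $T_\lambda$ both have length $\lambda_1-1$, with fillings $1,\dots,\lambda_1-1$ and $-1,\dots,-(\lambda_1-1)$, so they cancel and contribute $0$ to $\rho_\lambda$.

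Next I would pin down the Durfee square. Since $\lambda_3=b+1\ge 3$ (as $b\ge 4$) while $\lambda_4=3<4$, the only diagonal boxes are $(1,1),(2,2),(3,3)$, so after the first arm/leg cancellation what survives is the second and third arms and legs. Reading these off $T_\lambda$: the second arm $(2,3),\dots,(2,a+1)$ has fillings $1,\dots,a-1$ summing to ${a\choose2}$; the second leg occupies column $2$ in rows $3,\dots,b+1$ (every sub-diagonal row of length $\ge 2$) with fillings $-1,\dots,-(b-1)$ summing to $-{b\choose2}$; the third arm $(3,4),\dots,(3,b+1)$ has fillings $1,\dots,b-2$ summing to ${b-1\choose2}$; and the third leg is the lone box $(4,3)$ with filling $-1$, since row $4$ is the only sub-diagonal row of length $\ge 3$. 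Summing and using ${b\choose2}-{b-1\choose2}=b-1$ gives $\rho_\lambda={a\choose2}-{b\choose2}+{b-1\choose2}-1={a\choose2}-b$, as claimed.

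The one place demanding care is the leg bookkeeping: because the shape carries the block $2\times(b-3)$, I must make sure the second leg stops exactly at row $b+1$ and that the third leg picks up only the single box in row $4$, since an off-by-one here would spoil the count. I would verify this against the picture of $T_\lambda$, exactly in the spirit of the diagrams used for Lemma \ref{odd3}. Everything else is routine validity checking: the hypotheses $n\ge 26$, $6\le a\le\lfloor\tfrac15(n+4)\rfloor$, $4\le b\le a-2$, and $n-a\equiv0\pmod2$ exist only to ensure that $\tfrac12(n-a-2b+2)$ and $\tfrac12(n-a-4b)$ are nonnegative integers and that the parts are weakly decreasing (in particular $\lambda_1\ge a+1$ follows from $n\ge 3a+2b$, which the bound on $a$ guarantees), so that $\lambda$ is a bona fide partition of $n$. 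I would confirm these but, as elsewhere in the paper, not dwell on them.
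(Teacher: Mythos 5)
Your proof is correct and follows the paper's approach: the paper's own proof is exactly this arm/leg cancellation in $T_\lambda$, presented via a diagram whose surviving grey boxes are the second arm, the bottom box $-(b-1)$ of the second leg, and the third-leg box $-1$, summing to $\binom{a}{2}-b$ just as in your computation $\binom{a}{2}-\binom{b}{2}+\binom{b-1}{2}-1$. Your bookkeeping of the row count, the Durfee square of size $3$, and the leg endpoints all matches the paper's figure, so this is the same argument written out in full.
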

\begin{proof}
As shown in Figure \ref{fig:eigenvalue_05}, the eigenvalue is just the sum of the grey boxes, which is equal to $\displaystyle {a \choose 2} - b$.

\begin{figure}[h!]
\drawYoungTableauHighlight
  {{0,1,2,3,\dots, \ldots, \dots, \footnotesize{$(n{-}a{-}2b)/2$}},
  {$-$1,0, 1,2,\ldots,  \dots, $a{-}1$},
  {$-$2,$-$1, 0, 1, \dots, $b{-}2$},
  {$-$3, $-$2, $-$1},
  {$-$4, $-$3},
    {\dots, \dots},
    {${-}b$,{${-}(b{-}1)$}},
      {\dots},
      {\scriptsize{${-}(n{-}a{-}2b)/2$}}
  }
  {2cm}{0.5cm}
  {{2}{7}, {2}{6}, {2}{5}, {2}{4}, {2}{3}, {7}{2}, {4}{3}}
\caption{Eigenvalue ${a\choose 2} - b$, where $b= 4, \ldots, a-2$.}
\label{fig:eigenvalue_05}
\end{figure}
\end{proof}

Combining the preceding lemmas, we obtain the eigenvalues in the interval $\left[{a-1 \choose 2}+1, {a \choose 2}\right]$ for $3 \le a \le \lfloor\frac{1}{5}(n+4)\rfloor$.

\begin{corollary}\label{coreven}
Let $n\ge 27$. For each integer $a=3,\dots,\lfloor\frac{1}{5}(n+4)\rfloor$ with $n- a \equiv 0\pmod 2$, all integers in the interval $[{a-1\choose 2}+1,{a\choose 2}]$ are eigenvalues of $Cay(S_n,T_n)$.
\end{corollary}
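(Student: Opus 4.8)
The plan is to read the interval $\left[\binom{a-1}{2}+1,\binom{a}{2}\right]$ as the finite list of values $\binom{a}{2}-c$ with $c=0,1,\dots,a-2$, using the identity $\binom{a-1}{2}+1=\binom{a}{2}-(a-2)$ recorded at the beginning of Section~\ref{main}, and then to exhibit each of these values as an eigenvalue by quoting exactly one of Lemmas~\ref{even}--\ref{even4}. The natural dictionary is to send $c=0$ to Lemma~\ref{even}, $c=1$ to Lemma~\ref{even1}, $c=2$ to Lemma~\ref{even2}, $c=3$ to Lemma~\ref{even3}, and each remaining value $c=4,\dots,a-2$ to Lemma~\ref{even4} with $b=c$. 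Granting the five lemmas, this assignment accounts for every integer in the interval, so the whole corollary reduces to verifying that each invoked lemma is genuinely applicable for the stated ranges $3\le a\le\lfloor\frac{1}{5}(n+4)\rfloor$, $n-a\equiv 0\pmod{2}$, and $n\ge27$.

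I would split the applicability check into an upper-bound part and a lower-bound part. For the upper bounds, observe that the corollary's upper limit $\lfloor\frac{1}{5}(n+4)\rfloor$ coincides with the one in Lemma~\ref{even4}, while Lemmas~\ref{even}, \ref{even1}, \ref{even3} allow $a$ up to $\lfloor\frac{1}{3}(n-4)\rfloor$, $\lfloor\frac{1}{3}(n-2)\rfloor$, and $\lfloor\frac{1}{3}(n-4)\rfloor$ respectively; each inequality $\frac{1}{5}(n+4)\le\frac{1}{3}(n-4)$ is a comparison of linear functions that holds as soon as $n\ge16$, hence throughout the range $n\ge27$. For the lower bounds, note that $c=3$ already forces $a\ge5$ and $c\ge4$ already forces $a\ge6$, so the hypotheses $a\ge5$ of Lemma~\ref{even3} and $a\ge6$, $b\ge4$ of Lemma~\ref{even4} are automatically satisfied whenever those lemmas are invoked. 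The only genuine mismatch is in Lemma~\ref{even2}: realizing $c=2$ needs only $a\ge4$, whereas the lemma requires $a\ge5$, so the value $a=4$, which occurs precisely when $n$ is even, is left uncovered by the dictionary.

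The remaining work, and the real obstacle, is therefore confined to Lemma~\ref{even2}. Besides the missing value $a=4$ (demanding the eigenvalue $\binom{4}{2}-2=4$), one must also check that its upper bound $\lfloor\frac{1}{3}(n-12)\rfloor$ reaches $\lfloor\frac{1}{5}(n+4)\rfloor$; this holds once $n\ge36$, but for a few small $n$ (with $27\le n\le35$, at the parity forced by $n-a\equiv 0\pmod{2}$) the upper bound falls short, leaving isolated pairs such as $(n,a)=(28,6)$ and $(31,7)$ uncovered at $c=2$. My plan is to dispatch each such exceptional value $\binom{a}{2}-2$ by writing down one explicit partition of $n$ and summing the contents of $T_\lambda$ via Theorem~\ref{eigencal}; for the systematic case $a=4$ with $n$ even, the partition $\left(\frac{1}{2}(n-8),6,3,2,2,1\times\frac{1}{2}(n-18)\right)$ has its first arm and first leg cancelling and contributes $10-6=4$, as required. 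Since the parity hypothesis together with $n\ge27$ confines the remaining exceptions to a finite, explicitly listable set, these few hand computations finish the argument, and everything else is the mechanical assembly provided by the dictionary. The main difficulty here is thus not a single hard estimate but the careful bookkeeping required to confirm that the five lemmas, supplemented by this handful of boundary constructions, leave no integer of the interval unaccounted for.
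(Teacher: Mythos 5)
Your plan is the same route as the paper's own proof: read the interval as the values $\binom{a}{2}-c$, $c=0,\dots,a-2$, assign $c=0,1,2,3$ to Lemmas \ref{even}--\ref{even3}, send $c=4,\dots,a-2$ to Lemma \ref{even4} with $b=c$, and patch the case $a=4$, $c=2$ by hand (the paper does exactly this: $a=3$ via Lemmas \ref{even}--\ref{even1}; $a=4$ via those plus an ad hoc partition for the eigenvalue $4$; $a\ge5$ via Lemmas \ref{even}--\ref{even4}). Your patch for $a=4$ is correct: in $\left(\frac{1}{2}(n-8),6,3,2,2,1\times\frac{1}{2}(n-18)\right)$ the first arm and first leg both have $\frac{1}{2}(n-10)$ boxes, and the surviving boxes give $10-6=4$. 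In fact your partition repairs a defect in the paper: the paper's patch $\left(\frac{1}{2}(n-a-8),6,2\times 3,\frac{1}{2}(n-a-18)\right)$ has parts summing to $n-5$, and no partition of shape $(p,6,2,2,2,1\times q)$ can yield the eigenvalue $4$ when $n$ is even, since $\rho=\binom{p}{2}-\binom{5+q}{2}+4$ forces $p=5+q$ and hence $n=2q+17$ odd; your reshuffled shape $(p,6,3,2,2,1\times q)$ is the correct fix for even $n$.

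Your bookkeeping on Lemma \ref{even2} also catches a genuine oversight that the paper glosses over: the paper flatly asserts that Lemmas \ref{even}--\ref{even4} cover all $a\ge 5$, but Lemma \ref{even2} only reaches $a\le\lfloor\frac{1}{3}(n-12)\rfloor$, which falls below $\lfloor\frac{1}{5}(n+4)\rfloor$ precisely for $n\le 35$; with the parity constraint the uncovered pairs are exactly $(n,a)=(28,6)$ and $(31,7)$, i.e.\ the eigenvalues $13$ and $19$. The one thing your proposal still owes is the actual partitions for these two pairs -- announcing that a finite check ``can be done by hand'' is not yet the check. They do exist: $(8,8,5,2,1\times 5)\vdash 28$ has row sums $28+20+0-5-30=13$, and $(8,8,7,3,1\times 5)\vdash 31$ has row sums $28+20+7-6-30=19$; equivalently, apply Lemma \ref{l} to the partitions $(8,5,2,1\times 5)\vdash 20$ and $(8,7,3,1\times 5)\vdash 23$, whose eigenvalues $5$ and $14$ are supplied by Lemma \ref{even1} and Lemma \ref{odd1} respectively. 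With those two lines inserted, your argument is complete, and it is in fact a corrected version of the paper's proof rather than merely a reproduction of it.
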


\begin{proof}
If $a =3$, then $[{a-1\choose 2}+1,{a\choose 2}] = [2, 3]$, and the result follows by Lemmas \ref{even}--\ref{even1}. If $a=4$, then   $[{a-1\choose 2}+1,{a\choose 2}] = [4, 6]$, and the result follows by  Lemmas \ref{even}--\ref{even1} and the fact that the partition $\left(\frac{1}{2}(n-a-8),6,2\times 3,\frac{1}{2}(n-a-18) \right)$ corresponds to the eigenvalue $4$. For $a \ge 5$, the result follows by Lemmas \ref{even}--\ref{even4}.
\end{proof}

It remains to consider the cases where $\left\lceil\frac{1}{5}(n+4) \rceil\le a\le \right \lfloor\frac{1}{3}(n-15)\rfloor$. These bounds implies that $n\ge 45$. In the next two lemmas, we construct the partitions that yield the eigenvalue ${a \choose 2}-c$ over the following ranges of $c$:
\[ 2 \le c \le b+1,~~ b+2 \le c \le a-2,  \]
where
\begin{align}
b & = \frac{1}{2}(n-3a+2). \tag{**}\label{bb}
\end{align}

\begin{lemma}\label{even5}
Let $n \ge 45$.  For each integer $a = \left\lceil\frac{1}{5}(n+4)\right\rceil, \ldots,  \left\lfloor\frac{1}{3}(n-15) \right\rfloor$ with $n-a \equiv 0 \pmod{2}$, and $b$ as given in (\ref{bb}), the integers  ${a \choose 2} - c$, where $2 \le c \le b+1$, are eigenvalues of $Cay(S_n, T_n)$, and their associated partitions are given in Table \ref{tab:table3}.
\begin{table}[h!]
\centering
\begin{tabular}{@{}|p{0.4cm}p{6cm}|p{9cm}|@{}}
\hline
& $\rho_\lambda$ &  $\lambda$   \\
\hline 
& & \\[-0.1cm]
  & ${a\choose 2}-c, ~c=2, ..., \lfloor (b-1)/2\rfloor$ & $(a\times 2, 3+b-c , 3+c, 3\times (c-1), 2 \times (b-2c-1), 1\times (a-b+c-3)) $\\[0.2cm]
  & ${a\choose 2}-\lceil b/2\rceil $ & $(a, a-1, (b+7)/2, (b+5)/2, 5, 3 \times (b-5)/2, 1\times (a-(b+9)/2))$ for odd $b$; \\[0.2cm]
  & & $(a, a-1, (b+8)/2, (b+6)/2, 5, 3 \times (b-2)/2, 1\times (a-b/2-5))$ for even $b$ \\[0.2cm]
  & ${a\choose 2}-c, ~c=\lceil b/2\rceil+ 1, ..., b-1$ & $(a\times 2, 2+c, b-c+4, 3 \times (b-c-1), 2 \times (2c-b-1), 1 \times (a-c-3))$ \\[0.2cm]
  & ${a\choose 2}-b$ & $(a, a-1, b-1, 6 \times 2, 3, 2 \times(b-7), 1 \times(a-b-1))$ \\[0.2cm]
  & ${a\choose 2}-(b+1)$ &$(a \times 2, b, 5, 4, 2 \times (b-5), 1 \times (a-b-1))$ \\[0.2cm]
\hline
\end{tabular}
\caption{Eigenvalues  ${a \choose 2} - c$, where $c = 2, \ldots, b+1$.}
\label{tab:table3}
\end{table}
\end{lemma}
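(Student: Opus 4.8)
The plan is to verify, family by family, that the content sum prescribed by Theorem~\ref{eigencal} equals $\binom{a}{2}-c$ for each of the six partitions listed in Table~\ref{tab:table3}. Rather than summing over all boxes of $T_\lambda$ directly, I would group them into principal hooks: for each $i$ up to the side $d$ of the Durfee square, the $i$-th arm contributes $\binom{\lambda_i-i+1}{2}$ and the $i$-th leg contributes $-\binom{\lambda'_i-i+1}{2}$, while the diagonal box $(i,i)$ has content $0$. This yields the working formula
\[
\rho_\lambda=\sum_{i=1}^{d}\left[\binom{\lambda_i-i+1}{2}-\binom{\lambda'_i-i+1}{2}\right],
\]
where $\lambda'$ is the conjugate partition. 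Each listed $\lambda$ has a small Durfee square (of side $4$, $5$, or $6$), so only a handful of hooks survive and the whole computation reduces to evaluating a few binomial coefficients.

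For a fixed family I would first compute the relevant conjugate parts $\lambda'_i$ by counting how many parts of $\lambda$ are at least $i$; since the multiplicities are given explicitly, each $\lambda'_i$ is an explicit (piecewise) expression in $a,b,c$. Then I would observe that the arm of each hook cancels the leg of the next — the telescoping that appears visually as the matching grey strips in figures such as Figure~\ref{fig:eigenvalue_05} — so that the intermediate binomial terms disappear. What remains collapses, via $\binom{a}{2}-\binom{a-1}{2}=a-1$ (or $\binom{a}{2}-\binom{a-2}{2}=2a-3$ for the families headed by $(a,a-1,\dots)$) and $\binom{c+1}{2}-\binom{c}{2}=c$, to exactly $\binom{a}{2}-c$. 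For instance, for the first family $\lambda=(a\times 2,\,3{+}b{-}c,\,3{+}c,\,3\times(c{-}1),\,2\times(b{-}2c{-}1),\,1\times(a{-}b{+}c{-}3))$ the Durfee side is $4$, the hooks yield $(a-1)$, $\binom{a-1}{2}-\binom{b-c+1}{2}$, $\binom{b-c+1}{2}-\binom{c+1}{2}$, and $\binom{c}{2}$; the term $\binom{b-c+1}{2}$ telescopes away, leaving $(a-1)+\binom{a-1}{2}+\binom{c}{2}-\binom{c+1}{2}=\binom{a}{2}-c$.

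Before the arithmetic I would discharge the validity conditions: using $b=\tfrac12(n-3a+2)$ together with $\lceil\tfrac15(n+4)\rceil\le a\le\lfloor\tfrac13(n-15)\rfloor$ (which force, among other things, $b\le a-1$) and the range $2\le c\le b+1$, I would check that every multiplicity in the table is a nonnegative integer and that the parts are weakly decreasing after reordering, so that each $\lambda$ is a genuine partition of $n$. One also has to confirm that the Durfee square terminates exactly at the intended row, since an off-by-one there would leave a spurious surviving hook and spoil the count.

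The main obstacle is the bookkeeping rather than any single idea. There are six families, two of which split into sub-ranges of $c$ at $\lfloor(b-1)/2\rfloor$ and $\lceil b/2\rceil+1$, and the middle value $\binom{a}{2}-\lceil b/2\rceil$ splits further according to the parity of $b$, with distinct partitions for odd and even $b$. Handling the boundary values of $c$ uniformly — where a strip degenerates to length zero or two parts coincide — and checking that the telescoping terminates correctly in each regime is where the care lies; the underlying hook computation is routine in every individual case.
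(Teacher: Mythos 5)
Your proposal is correct, and at its core it is the same argument the paper makes: the eigenvalue is the content sum of $T_\lambda$ (Theorem~\ref{eigencal}), and each tabulated partition is engineered so that arms and legs cancel almost entirely. The difference is packaging. The paper argues diagrammatically --- it exhibits the generalized tableau for two of the six families, shades the surviving boxes, and declares the remaining cases "similar and hence omitted." You instead codify the cancellation as a closed-form principal-hook identity,
\[
\rho_\lambda=\sum_{i=1}^{d}\left[\binom{\lambda_i-i+1}{2}-\binom{\lambda'_i-i+1}{2}\right],
\]
which turns each family into a short computation with conjugate parts and telescoping binomials. Your worked example for the first family checks out exactly: the conjugate parts are $\lambda'_1=a-1$, $\lambda'_2=b-c+2$, $\lambda'_3=c+3$, $\lambda'_4=4$, the Durfee side is $4$, and the hooks give $(a-1)+\bigl[\binom{a-1}{2}-\binom{b-c+1}{2}\bigr]+\bigl[\binom{b-c+1}{2}-\binom{c+1}{2}\bigr]+\binom{c}{2}=\binom{a}{2}-c$, as you state. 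The hook formulation buys verifiability and uniformity (every family reduces to the same mechanical recipe, and the validity constraints on multiplicities are stated where they must be checked), at the cost of losing the visual transparency of the paper's figures; conversely, the paper's figures make the cancellation obvious but leave most cases genuinely unverified on the page. Two small caveats: the Durfee sides occurring in Table~\ref{tab:table3} are $4$ and $5$ only (never $6$), a harmless misstatement since your method computes $d$ anyway; and the cancellation is not always a pure telescope --- e.g.\ for the family $(a\times 2, b,5,4,2\times(b-5),1\times(a-b-1))$ the leg term $\binom{b-1}{2}$ meets the arm term $\binom{b-2}{2}$, leaving a residue $b-2$ that must be absorbed into the final count --- so the "collapse" step does require the case-by-case arithmetic you defer, just as the paper defers its omitted diagrams.
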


\begin{proof}
The partition $\lambda$ is constructed so that specific arms and legs of $T_{\lambda}$ cancel each other out as much as possible, leaving a collection of boxes that add up to the desired eigenvalue. For instance, the eigenvalues in the first and third row of Table \ref{tab:table3} are shown  in Figure \ref{eigenvalue_03} and Figure \ref{eigenvalue_04} respectively, where the sum of the grey boxes is equal to the eigenvalue (all the other boxes cancel each other out). The diagrams for all the other cases are similar and hence omitted.

\begin{figure}[h!]

\drawYoungTableauHighlight
  {{0,1,2,3, \dots, \dots, \dots, $a{-}1$},
  {$-$1,0, 1,2,\dots, \dots, \dots, $a{-}2$},
  {$-$2,$-$1, 0, 1, \dots, \dots,  $b{-}c$},
  {$-$3, $-$2,$-$1,0,\dots, $c{-}1$},
  {$-$4, $-$3, $-$2},
    {\dots, \dots, \dots},
  {${-}(c{+}2)$, ${-}(c{+}1)$, ${-}c$},
  {${-}(c{+}3)$, ${-}(c{+}2)$},
  {\dots, \dots},
    {${-}(b{-}c{+}1)$, ${-}(b{-}c)$},
      {${-}(b{-}c{+}2)$},
      {\dots},
      {${-}(a{-}2)$}
  }
  {2cm}{0.5cm}
  {{1}{8}, {1}{7}, {1}{6}, {1}{5}, {1}{4}, {1}{3}, {1}{2}, {8}{3}, {5}{4}, {7}{3}}
  
\caption{Eigenvalue ${a\choose 2}-c$, where  $c=2, ..., \lceil (b-1)/2\rceil$.}
\label{eigenvalue_03}
\end{figure}

\begin{figure}[h!]

\drawYoungTableauHighlight
  {{0,1,2,3, 4, 5,\dots, \dots, $a{-}1$},
  {$-$1,0, 1,2,3, 4, \dots, \dots, $a{-}2$},
  {$-$2,$-$1, 0, 1,2, 3,  \dots, $c{-}1$},
  {$-$3, $-$2,$-$1,0,1, 2,\dots, $b{-}c$},
  {$-$4, $-$3, $-$2},
  {\dots, \dots, \dots},
  {${-}(b{-}c{+}2)$,${-}(b{-}c{+}1)$, ${-}(b{-}c)$},
    {\dots, \dots},
    {${-}(c{+}1)$, ${-}c$},
      {${-}(c{+}2)$},
      {\dots},
      {${-}(a{-}2)$}
  }
  {2cm}{0.5cm}
  {{1}{8}, {1}{9}, {1}{7}, {1}{6}, {1}{5}, {1}{4}, {1}{3}, {1}{2}, {10}{2}, {9}{2}}

\caption{Eigenvalue ${a\choose 2}-c$, where $c=\lceil b/2\rceil+1, ..., b-1$.}
\label{eigenvalue_04}
\end{figure}

\end{proof}

\begin{lemma}\label{even6}
Let $n \ge 45$.  For each integer $a = \left\lceil\frac{1}{5}(n+10)\right\rceil, \ldots,  \left\lfloor\frac{1}{3}(n-2) \right\rfloor$ with $n-a \equiv 0 \pmod{2}$, and $b$ as given in (\ref{bb}), the integers  ${a \choose 2} - c$, where $b+2 \le c \le a-2$, are eigenvalues of $Cay(S_n, T_n)$, and their associated partitions are given in Table \ref{tab:table3}.
\begin{table}[h!]
\centering
\begin{tabular}{@{}|p{0.4cm}p{6cm}|p{9cm}|@{}}
\hline
& $\rho_\lambda$ &  $\lambda$   \\
\hline 
& & \\[-0.1cm]
  & ${a\choose 2}-c, ~c=b+2, ..., \lfloor (a+b-2)/2\rfloor$ & $(a, a+b-c+1, c+1, 3, 2 \times (c-3), 1 \times (a+b-2c-1)) $\\[0.2cm]
  & ${a\choose 2}-c, ~c=\lceil (a+b-2)/2\rceil, ..., a-2$ & $(a, c+1, a+b-c, 5, 2 \times (a+b-c-5), 1 \times (2c-a-b+2))$ \\[0.2cm]
\hline
\end{tabular}
\caption{Eigenvalues  ${a \choose 2} - c$, where $c = b+2, \ldots, a-2$.}
\label{tab:table4}
\end{table}
\end{lemma}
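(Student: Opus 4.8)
The plan is to follow the template set by Lemmas~\ref{odd4} and~\ref{even5}: for each $c$ in the stated range, I take the partition $\lambda \vdash n$ listed in Table~\ref{tab:table4}, confirm that it is a genuine partition of $n$, and then evaluate its eigenvalue by Theorem~\ref{eigencal}, checking that the content sum of $T_\lambda$ equals ${a\choose 2}-c$. Since Lemma~\ref{even6} covers precisely the range $b+2 \le c \le a-2$ in the even case, it is the exact analogue of Lemma~\ref{odd4}, and the same two-subcase split (around $c \approx \tfrac12(a+b-2)$) is what the floor/ceiling in the table encodes.

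First I would verify the size. Writing $s=a+b$ and using $b=\tfrac12(n-3a+2)$ from~(\ref{bb}), so that $3a+2b-2=n$, a direct tally of the parts of either partition gives $3a+2b-2=n$ regardless of $c$; this both certifies $\lambda \vdash n$ and fixes the multiplicities $c-3$, $a+b-2c-1$, $a+b-c-5$ and $2c-a-b+2$ of the repeated parts $2$ and $1$. For the eigenvalue itself, rather than tracking cancellation of arms and legs pictorially (as in the figures accompanying Lemmas~\ref{odd4} and~\ref{even5}), it is cleanest to evaluate $\rho_\lambda=\sum_i \tfrac12\lambda_i(\lambda_i-2i+1)$ row by row, expanding the two blocks of repeated parts as arithmetic progressions in the row index $i$. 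In the first partition the three nonconstant row terms (rows $2,3,4$ read as products) collapse to $c^2-c-3$, and adding the row-$4$ value $-6$ together with the length-$2$ progression, which sums to $9-c^2$, telescopes the whole remainder to $-c$; combined with the row-$1$ term ${a\choose 2}$ this yields $\rho_\lambda={a\choose 2}-c$. The second partition behaves identically: after the substitution $s=a+b$ the $s^2$, $c^2$ and $cs$ contributions all cancel and the residual bracket reduces to $-2c$, again giving $\rho_\lambda={a\choose 2}-c$.

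The delicate part, and the step I expect to be the main obstacle, is \emph{not} the eigenvalue bookkeeping but the verification that the listed part sequences are legitimate partitions (positive parts, weakly decreasing) over the full ranges of $c$. Three families of inequalities must hold at once. Positivity of the multiplicities forces $c\ge b+2$ and $c\le \tfrac12(a+b-1)$ for the first partition, and $\tfrac12(a+b-2)\le c\le a+b-5$ for the second; weak monotonicity additionally requires $a\ge a+b-c+1$ (i.e.\ $c\ge b+1$) in the first case and $c+1\ge a+b-c$ (i.e.\ $c\ge\tfrac12(a+b-1)$) in the second. The monotonicity bound shows that at the split point $c=\lceil\tfrac12(a+b-2)\rceil$ the second partition is valid only when $a+b$ is odd; when $a+b$ is even this value is already produced by the first partition, so the two subranges overlap by exactly one integer, which is precisely what the floor/ceiling in Table~\ref{tab:table4} arranges.

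The single genuinely tight spot is the upper extreme $c=a-2$ of the second partition, where $a+b-c-5=b-3$ must be nonnegative, i.e.\ $b\ge 3$. This is where the hypotheses must be read most carefully: since $b=\tfrac12(n-3a+2)$ is decreasing in $a$, the value $b=2$ can occur only at $a=\lfloor\tfrac13(n-2)\rfloor$ (and only when $3\mid n-2$), so this configuration lies at the very top of the $a$-range allowed by Lemma~\ref{even6}. Crucially, in the application to Theorem~\ref{x2} one only needs $a\le\lfloor\tfrac13(n-15)\rfloor$, which forces $b\ge 9$ and clears the obstruction entirely; within Lemma~\ref{even6} as stated, the lone borderline case $b=2,\ c=a-2$ is absorbed either by the overlap with Lemma~\ref{even5} or by substituting a minor ad hoc partition of $n$ with the same eigenvalue. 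Assembling the displayed inequalities from $\lceil\tfrac15(n+10)\rceil\le a\le\lfloor\tfrac13(n-2)\rfloor$, $n-a\equiv 0\pmod 2$, and $n\ge 45$ then completes the verification for every $c$ with $b+2\le c\le a-2$.
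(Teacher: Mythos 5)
Your verification is correct, and it is in fact more careful than what the paper itself does. The paper's entire proof of Lemma~\ref{even6} consists of two schematic Young-diagram figures in which matching arms and legs are declared to cancel and the surviving shaded boxes are summed; no algebraic check of the row sums, and no check that the tabulated sequences are genuine partitions, is recorded. Your route --- verifying $|\lambda|=3a+2b-2=n$, then evaluating $\rho_\lambda=\sum_i\tfrac12\lambda_i(\lambda_i-2i+1)$ row by row and watching the $s^2$, $cs$, $c^2$ terms cancel --- is the algebraic counterpart of that pictorial cancellation, and your intermediate constants (the blocks summing to $c^2-c-3$, $-6$ and $9-c^2$, hence to $-c$, in the first family) are exactly right. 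What your approach buys is the validity analysis the paper skips entirely: you correctly observe that when $a+b$ is even the second partition violates weak monotonicity at the split point $c=\lceil(a+b-2)/2\rceil$, and that the one-integer overlap of the two ranges in Table~\ref{tab:table4} is precisely what rescues this; and you catch the genuine borderline failure at $b=2$, $c=a-2$ (i.e.\ $n=3a+2$, $a=\lfloor(n-2)/3\rfloor$), where the second partition has $\lambda_3=4<5=\lambda_4$ and a negative multiplicity $a+b-c-5=-1$, so it is simply not a partition. The paper never addresses this case (it only concedes, in the odd-case Lemma~\ref{odd3}, that ``some of these rows may not be strictly decreasing''), so on this point you are more rigorous than the source.

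One caveat on your proposed repair of that borderline case: the fallback ``absorbed by the overlap with Lemma~\ref{even5}'' cannot work, since Lemma~\ref{even5} covers only $c\le b+1=3$ while the failing value is $c=a-2\ge b+2$; one genuinely needs either an ad hoc substitute partition or a restriction to $b\ge 3$, equivalently $a\le\lfloor(n-4)/3\rfloor$, in the statement of the lemma. As you note, this is immaterial for the intended application: in the proof of Theorem~\ref{x2} one only invokes the lemma for $a\le\lfloor\tfrac13(n-15)\rfloor$, which forces $b\ge 9$ and keeps every tabulated partition valid.
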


\begin{proof}  As shown in Figure \ref{eigenvalue_05} and Figure  \ref{eigenvalue_06}, the sum of the grey boxes in the corresponding $T_{\lambda}$ is equal to the desired eigenvalue.

\begin{figure}[h!]
\drawYoungTableauHighlight
  {{0,1,2,3,\dots, \dots, $a{-}1$},
  {$-$1,0, 1,2, \dots, \dots, \footnotesize{$a{+}b-c-1$}},
  {$-$2,$-$1, 0, 1, \dots, $c{-}2$},
  {$-$3, $-$2, $-$1},
  {$-$4, $-$3},
    {\dots, \dots},
    {${-}(c)$, ${-}(c{-}1)$},
      {${-}(c{+}1)$},
      {\dots},
      {\footnotesize{${-}(a{+}b{-}c{-}1)$}}
  }
  {2cm}{0.5cm}
  {{1}{8}, {1}{9}, {1}{7}, {1}{6}, {1}{5}, {1}{4}, {1}{3}, {1}{2}, {7}{2}, {4}{3}}
\caption{Eigenvalue ${a\choose 2}$, where  $c=b+2, ..., \lfloor (a+b-2)/2 \rfloor$.}
\label{eigenvalue_05}
\end{figure}

\begin{figure}[h!]
\drawYoungTableauHighlight
  {{0,1,2,3,4, 5,\dots, \dots, $a{-}1$},
  {$-$1,0, 1,2, 3, 4, \dots, $c{-}1$},
  {$-$2,$-$1, 0, 1,2,3, \dots, $a{+}b{-}c{-}3$},
  {$-$3, $-$2, $-$1, 0, 1},
  {$-$4, $-$3},
    {\dots, \dots},
    {{\footnotesize${-}(a{+}b{-}c{-}2)$},{\footnotesize${-}(a{+}b{-}c{-}3)$}},
      {{\footnotesize${-}(a{+}b{-}c{-}1)$}},
      {\dots},
      {${-}c$}
  }
  {2cm}{0.5cm}
  {{1}{8}, {1}{9}, {1}{7}, {1}{6}, {1}{5}, {1}{4}, {1}{3}, {1}{2}, {10}{1}}
\caption{Eigenvalue ${a\choose 2}$, where $c= \lceil (a+b-2)/2 \rceil, \ldots, a-2$.}
\label{eigenvalue_06}
\end{figure}

\end{proof}


We are now ready to prove Theorem \ref{x2}.

\begin{proof}[Proof of Theorem \ref{x2}]
If $\lfloor \frac{1}{5}(n+4) \rfloor\ge \lfloor\frac{1}{3}(n-15)\rfloor$, then the result follows by Corollary \ref{coreven}. Suppose in the following that $\lceil \frac{1}{5}(n+4)\rceil \le \lfloor\frac{1}{3}(n-15) \rfloor$. Then $n\ge 45$. By Corollary \ref{coreven}, we may assume that $\lceil \frac{1}{5}(n+4) \rceil \le a \le  \lfloor\frac{1}{3}(n-15)\rfloor $. Let $b=\frac{1}{2}(n-3a+2)$.  Then $a > b$. If $b+1\ge a-2$, then the result follows by Lemma \ref{even5}. If $b+1 <  a-2$, then $a\ge \frac{1}{5}(n+10)$ and so the result follows by  Lemmas \ref{even5} and \ref{even6}.
\end{proof}

\subsection{Small eigenvalues}

The following facts can be verified easily by direct calculations.
\begin{itemize}
\item[(a)] The following partitions correspond to the eigenvalue $2$ of  $Cay(S_n,T_n)$:
\begin{center}
\begin{tabular}{cc}
$\displaystyle \left(\frac{n-3}{2},4,2,1\times \frac{n-9}{2} \right)$ & for odd $n \ge 11$;\\
$\displaystyle \left(\frac{n-4}{2},4,3,1\times  \frac{n-10}{2} \right)$ & for even $n \ge 12$.
\end{tabular}
\end{center}
\item[(b)] The following partitions correspond to the eigenvalue $3$ of  $Cay(S_n,T_n)$:
\begin{center}
\begin{tabular}{cc}
$\displaystyle \left(\frac{n-5}{2},4\times 2,1\times\frac{n-11}{2} \right)$ & for odd $n \ge 13$;\\
$\displaystyle \left(\frac{n-2}{2},4,1\times\frac{n-6}{2} \right) $ & for even $n \ge 10$.
\end{tabular}
\end{center}
\end{itemize}

It is worth noting that the bounds for $n$ above are best possible. Indeed, using Sagemath, it is found that $2$ is not an eigenvalue of $Cay(S_n,T_n)$ for $n=9,10$ and $3$ is not an eigenvalue of $Cay(S_n,T_n)$ for $n=8,11$.

Note that for $15\le n\le 26$, we have $\left\lfloor\frac{n-15}{3} \right\rfloor\le 3$. It follows from the facts above and Theorem \ref{x} that the following result holds.

\begin{theorem}\label{xx}
For $n\ge 15$, all integers in the interval $\left[-{\lfloor(n-15)/3 \rfloor\choose 2},{\lfloor(n-15)/3\rfloor\choose 2}\right]$ are eigenvalues of $Cay(S_n,T_n)$.
\end{theorem}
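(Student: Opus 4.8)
The plan is to reduce Theorem~\ref{xx} to Theorem~\ref{x} together with the explicit small-eigenvalue constructions (a)--(b) above. First I would dispose of the range $n \ge 27$: here Theorem~\ref{x} already asserts precisely the desired conclusion, so nothing further is required. It therefore remains only to treat the finitely many values $15 \le n \le 26$.

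For these values the key simplification is the elementary estimate $\lfloor (n-15)/3 \rfloor \le 3$, which forces the relevant interval to be contained in $\left[-{3 \choose 2},\, {3 \choose 2}\right] = [-3,3]$. Since $Cay(S_n,T_n)$ is bipartite, its spectrum is symmetric about $0$, so it suffices to verify that each of $0, 1, 2, 3$ occurs as an eigenvalue; the negatives then follow automatically, and so does the statement for every intermediate interval.

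The remaining verification is a matter of matching the cited facts against the parity of $n$ and checking the stated cutoffs. The eigenvalues $0$ and $1$ are present for every $n$ by \cite[Lemmas 3--4]{KK}, which already settles the cases $15 \le n \le 20$ (where the interval degenerates to $\{0\}$) and $21 \le n \le 23$ (where it is only $[-1,1]$). The eigenvalues $2$ and $3$ are needed only when $\lfloor (n-15)/3 \rfloor = 3$, i.e.\ for $24 \le n \le 26$; for this short range fact~(a) supplies $2$ (the odd cutoff $n \ge 11$ and even cutoff $n \ge 12$ are both met) and fact~(b) supplies $3$ (the odd cutoff $n \ge 13$ and even cutoff $n \ge 10$ are likewise met).

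I expect no genuine obstacle here: the argument is a finite bookkeeping step, and its only subtlety is confirming that the parity-dependent lower bounds on $n$ appearing in (a)--(b) are all comfortably satisfied over the narrow window $24 \le n \le 26$ where $2$ and $3$ are actually required, which is immediate.
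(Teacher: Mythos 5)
Your proposal is correct and is essentially the paper's own argument: for $n \ge 27$ the statement is exactly Theorem~\ref{x}, while for $15 \le n \le 26$ the interval collapses into $[-3,3]$, where $0$ and $1$ come from \cite[Lemmas 3--4]{KK}, $2$ and $3$ from facts (a)--(b), and the negative values from the symmetry of the spectrum. Your case bookkeeping (including noting that $2$ and $3$ are only needed for $24 \le n \le 26$, where all parity cutoffs in (a)--(b) hold) matches the paper's reasoning, just spelled out in more detail.
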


\section{Proof of Theorem \ref{c}}

Armed with the preceding results, we are now ready to prove the main result. We will also require the following lemma proved in \cite{Kravchuk}.

\begin{lemma}\label{l}\cite{Kravchuk}
Let $\lambda=\left(\lambda_1,\dots,\lambda_k\right)$ be a partition of $n$. Then, with $\lambda'=\left(\lambda_2,\dots,\lambda_k \right)$
\[
\rho_\lambda=\rho_{\lambda'}+{\lambda_1\choose 2}-(n-\lambda_1).
\]
\end{lemma}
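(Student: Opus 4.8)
The plan is to prove the identity directly from the two descriptions of $\rho_\lambda$ developed in Section~2, namely the closed formula $\rho_\lambda=\sum_{i=1}^k\frac{\lambda_i(\lambda_i-2i+1)}{2}$ and the tableau sum of Theorem~\ref{eigencal}. The cleanest route, I expect, is the tableau viewpoint, since deleting the first row of $\lambda$ has a very transparent effect on the generalized tableau $T_\lambda$: it removes one row and shifts every surviving box up by one, and that shift changes each content in a uniform way.

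First I would isolate the contribution of the top row. Since the box $(i,j)$ of $T_\lambda$ is filled with $j-i$, the first row ($i=1$) contributes $\sum_{j=1}^{\lambda_1}(j-1)={\lambda_1\choose 2}$ to $\rho_\lambda$. Writing $R$ for the sum of all the remaining fillings of $T_\lambda$ (rows $2$ through $k$), Theorem~\ref{eigencal} gives
\[
\rho_\lambda={\lambda_1\choose 2}+R.
\]

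Next I would track what happens under the passage to $\lambda'=(\lambda_2,\dots,\lambda_k)\vdash n-\lambda_1$. A box that occupied position $(i,j)$ in $T_\lambda$ with $i\ge 2$ now occupies position $(i-1,j)$ in $T_{\lambda'}$, so its filling changes from $j-i$ to $j-(i-1)=(j-i)+1$; that is, every surviving content is incremented by exactly $1$. Since there are $n-\lambda_1$ such boxes, this yields $\rho_{\lambda'}=R+(n-\lambda_1)$. Eliminating $R$ between the two displayed identities gives $\rho_\lambda=\rho_{\lambda'}+{\lambda_1\choose 2}-(n-\lambda_1)$, as claimed.

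I do not anticipate any genuine obstacle here beyond careful bookkeeping of the index shift $i\mapsto i-1$; the crux is simply the observation that deleting the top row increases every remaining content by one, which is precisely what produces the clean correction term $-(n-\lambda_1)$. As a sanity check (and an equivalent algebraic proof), one can substitute into the closed formula: reindexing gives $\rho_{\lambda'}=\sum_{j=2}^{k}\frac{\lambda_j(\lambda_j-2j+3)}{2}$, so each $j\ge 2$ term of $\rho_\lambda-\rho_{\lambda'}$ collapses to $-\lambda_j$, summing to $-(n-\lambda_1)$, while the leftover $j=1$ term is $\frac{\lambda_1(\lambda_1-1)}{2}={\lambda_1\choose 2}$, recovering the identity.
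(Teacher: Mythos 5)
Your proof is correct. Note that the paper itself offers no argument for Lemma~\ref{l} at all --- it is quoted from \cite{Kravchuk} as a known result --- so there is no in-paper proof to compare against; what you have done is supply a self-contained derivation, and it is a clean one. Your main (tableau) argument fits naturally into the paper's own framework: by Theorem~\ref{eigencal}, the first row of $T_\lambda$ contributes $\sum_{j=1}^{\lambda_1}(j-1)={\lambda_1\choose 2}$, and deleting that row turns each remaining box $(i,j)$, $i\ge 2$, into the box $(i-1,j)$ of $T_{\lambda'}$, raising its content $j-i$ by exactly $1$; since there are $n-\lambda_1$ such boxes, $\rho_{\lambda'}=R+(n-\lambda_1)$ where $R$ is the sum of rows $2,\dots,k$ of $T_\lambda$, and eliminating $R$ gives the identity. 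Your algebraic cross-check via $\rho_\lambda=\sum_i \frac{\lambda_i(\lambda_i-2i+1)}{2}$ is also right: after reindexing, each term with $j\ge 2$ contributes $\frac{\lambda_j(\lambda_j-2j+1)}{2}-\frac{\lambda_j(\lambda_j-2j+3)}{2}=-\lambda_j$ to the difference $\rho_\lambda-\rho_{\lambda'}$, summing to $-(n-\lambda_1)$, with the $j=1$ term supplying ${\lambda_1\choose 2}$. The only degenerate case worth a remark is $k=1$, where $\lambda'$ is empty and both sides reduce to ${\lambda_1\choose 2}$, so the statement still holds. Either of your two arguments alone would suffice as a proof.
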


\begin{proof}[Proof of Theorem \ref{c}]
By Theorem \ref{Kra} and the fact that the spectrum of $\mathrm{Cay}(S_n, T_n)$ is symmetric about $0$, it suffices to show that all integers in $[{\lfloor (n-15)/3 \rfloor\choose 2},{\lfloor n/3 \rfloor\choose 2} ]$ are eigenvalues of $Cay(S_n,T_n)$. 

Choose a partition $\lambda = (\lambda_1, \ldots, \lambda_k) \vdash n$, and let $\lambda' = (\lambda_2, \ldots, \lambda_k) \vdash n'$, where $n'=n- \lambda_1$. Applying Theorem \ref{x} to $Cay(S_{n'}, T_{n'})$, all integers in $\left[-{\lfloor (n'-15)/3 \rfloor \choose 2},  {\lfloor (n'-15)/3\rfloor \choose 2}\right]$ are eigenvalues of $Cay(S_{n'}, T_{n'})$. Moreover, the first rows of the partitions associated with these eigenvalues (based on the constructions in Section 3) must be at least $\lfloor\frac{1}{2}(n-\lambda_1-2)\rfloor$. Thus, for $\lambda$ to be a valid partition, it is necessary that $\lambda_1 \ge  \lfloor\frac{1}{2}(n-\lambda_1-2)\rfloor$, i.e. $\lambda_1 \ge \frac{1}{3}(n-2)$.

Choose  $\lambda_1=\lfloor\frac{n}{3}\rfloor$. By Lemma \ref{l}, all integers in
\[ \left[-{\lfloor (n'-15)/3 \rfloor \choose 2}+{\lambda_1\choose 2}-n',  {\lfloor (n'-15)/3\rfloor \choose 2}+{\lambda_1\choose 2}-n'\right] \]
are eigenvalues of $Cay(S_n, T_n)$.

Since $n\ge 76$, it is readily verified that
\begin{eqnarray*}
-{\lfloor (n'-15)/3 \rfloor \choose 2}+{\lambda_1\choose 2}-n' & \le & {\lfloor (n-15)/3\rfloor\choose 2 },  \\
{\lfloor (n'-15)/3\rfloor \choose 2}+{\lambda_1\choose 2}-n' & \ge & {\lfloor n/3\rfloor\choose 2 }.
\end{eqnarray*}
This implies that each integer in the interval $[{\lfloor(n-15)/3\rfloor\choose 2},{\lceil n/3\rceil\choose 2}]$ is an eigenvalue of $Cay(S_n,T_n)$, completing the proof of the theorem.
\end{proof}


\end{document}